\documentclass[10pt,reqno]{amsart}
\usepackage{fullpage}
\usepackage{amsfonts,amssymb,amsthm,amsbsy,latexsym,amscd,amsmath,euscript,enumitem,manfnt, marvosym,verbatim,calc,mathrsfs,tensor,textcomp,color,xcolor,tikz-cd,setspace,upgreek,bbm,bm}
\usepackage{amsgen,graphicx,dsfont,textcomp,pifont}
\usepackage[citecolor=blue,colorlinks=true]{hyperref}
\usepackage[all]{xy}
\oddsidemargin -.0015in \evensidemargin -.015in \textwidth 5.6in \topmargin
-.15in \textheight 9in
\newtheorem{defn}{Definition}[section]
\newtheorem{prop}[defn]{Proposition}

\newtheorem{lem}[defn]{Lemma}
\newtheorem{thm}[defn]{Theorem}
\newtheorem{cor}[defn]{Corollary}
\newtheorem{rem}[defn]{Remark}

\newcommand {\C}{{\mathds C}}

\newcommand {\R}{{\mathds R}}

\def\Ker{\operatorname{Ker}}

\def\dim{\operatorname{dim}}

\def\Im{\operatorname{Im}}

\def\Hom{\operatorname{Hom}}

\def\Rep{\operatorname{Rep}}
\def\Fun{\operatorname{Fun}}
\def\Vec{\operatorname{Vec}}
\def\sgn{\operatorname{sgn}}
\title{On Perverse sheaves of a Coxeter hyperplane arrangement of type $\mathcal{A}_n$}
\author{Umesh V Dubey}

\address{Harish-Chandra Research Institute, A CI of HBNI, Prayagraj, UP,  INDIA.}  
\email{umeshdubey@hri.res.in}

\author{Subham Sarkar}

\address{Kerala School of Mathematics,  INDIA.}  \email{subham.sarkar13@gmail.com, subham13@ksom.res.in}

\subjclass{Primary 20F55; Secondary 32S22, 32S60}

\begin{document}

	\baselineskip=12pt
	\maketitle
	
	\begin{abstract}
		Kapranov and schechtman gave quiver description of perverse sheaves on real hyperplane arrangements. We used this description to relate the perverse sheaves on Coxeter hyperplane arrangements of type $\mathcal A_n$ for different values of $n$. As a consequence we prove that the simple perverse sheaves whose stalk on open cells are zero are induced from the perverse sheaves on lower dimension arrangements.
	\end{abstract}
	
	\section{Introduction}
	
	For an algebraic variety $X$ defined over the field of complex numbers $\C$, let $\mathscr{P}er(X,\mathcal{S})$ be the category of perverse sheaves with respect to a Whitney stratification $\mathcal{S}$. It is an abelian category obtained as a heart of a suitable $t$-structure defined by a perversity condition in the triangulated category $D^b_{const}(X,\mathcal{S})$, the bounded derived category of complexes of sheaves of $\C$ vector spaces whose cohomology sheaves are constructible with respect to the stratification $\mathcal{S}$ (\cite{BBD}). In certain special cases  the category $\mathscr{P}er(X,\mathcal{S})$ has a few combinatorial descriptions due to \cite{MV2} and \cite{KV}. 
	
	Let $X$ be a linear subspace of dimension  $n$ and $H_i$ be a hyperplane defined over the field of real numbers  in $X\cong \C^n$ for all $1\leq i\leq N$. Then $X$ has a natural Whiney stratification $\mathcal{S}$. The abelian category of perverse sheaves  $\mathscr{P}er(X,\mathcal{S})$ is equivalent to the category of representation of double quivers with a certain set of relations known as \emph{monotonicity, invertibility,} and \emph{ transitivity} \cite{KV}. In \cite{Bapat}, the category $\mathscr{P}er(X,\mathcal{S})$ is equivalent to the abelian category of modules over a noncommutative ring $R$ when $X$ is Coxeter hyperplane arrangements of type $\mathcal{A}_n$. Here we prove the following.
	
	Let $V^n$ be the linear subspace in $\R^n$ defined by $\{\sum_{ i=1}^{n+1} x_i=0\}\subset \mathds{R}^{n+1}$. A Coxeter  arrangement of hyperplanes in $V^n$ defined by the collection of hyperplanes $\mathcal{A}_n$ consisting of hyperplanes
	\[L^n_{i,j}:=\{(x_1,x_2,\dots,x_{n+1})\in V^n:x_i-x_j=0, \hspace{17pt} 1\leq i< j\leq n+1\}.\]
	The hyperplane arrangements of type $\mathcal{A}_n$ determine a natural Whitney stratification $\mathcal{S}^n$ on $V^n_{\C}$ ($\S$ \ref{ii}). For $1\leq i<j\leq n+1$, the linear map $$\rho_{L^{n+1}_{i,j}}:V^n\rightarrow V^{n+1}$$ defined by
	$$(x_1\dots,x_{i-1},x_i,x_{i+1},\dots,x_n,-(\sum_k x_k))\mapsto((x_1\dots,x_{j-1},x_i,x_{j+1},\dots,x_{n+1},-(\sum_{k\neq i} x_{k}+2x_i)) $$ is not a stratified algebraic morphism between two arrangements $(V^n_{\C},\mathcal{S}^n)$ and $(V^{n+1}_{\C},\mathcal{S}^{n+1})$. However, we have	
	\begin{thm}
		For any $1\leq i<j\leq n+1$,	there  is a fully-faithful exact functor
		
		\begin{equation}
		\Lambda_{L_{i,j}^{n+1}}:\mathscr{P}er(V_{\C}^n,\mathcal{S}^n)\rightarrow \mathscr{P}er(V_{\C}^{n+1},\mathcal{S}^{n+1}),
		\end{equation}
		which makes the following diagram commutative
		$$
		\begin{tikzcd}
		\mathscr{P}er(V_{\C}^n,\mathcal{S}^n)\arrow{r}{\mathcal{Q}^n}\arrow{d}{\Lambda_{L_{i,j}^{n+1}}}& \mathcal{J}_n\arrow{d}{\Phi_{L_{i,j}^{n}}}\\
		\mathscr{P}er(V_{\C}^{n+1},\mathcal{S}^{n+1})\arrow{r}{\mathcal{Q}^{n+1}}& \mathcal{J}_{n+1}.
		\end{tikzcd}
		$$ 
		
	\end{thm}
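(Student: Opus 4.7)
The plan is to construct $\Lambda_{L^{n+1}_{i,j}}$ at the combinatorial level via the Kapranov--Schechtman quiver equivalences and then transport back: setting $\Lambda_{L^{n+1}_{i,j}}:=(\mathcal{Q}^{n+1})^{-1}\circ\Phi_{L^{n}_{i,j}}\circ\mathcal{Q}^{n}$ makes the diagram commute tautologically, so the content of the theorem reduces to showing that $\Phi_{L^{n}_{i,j}}:\mathcal{J}_n\to\mathcal{J}_{n+1}$ is fully faithful and exact.

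The first step is a combinatorial identification. On the closed hyperplane $L^{n+1}_{i,j}\subset V^{n+1}$ one has $x_i=x_j$, so $L^{n+1}_{i,m}$ and $L^{n+1}_{j,m}$ cut out the same subspace for each $m\notin\{i,j\}$, while $L^{n+1}_{k,m}$ with $\{k,m\}\cap\{i,j\}=\emptyset$ restricts to a genuine hyperplane. Under the identification $\rho_{L^{n+1}_{i,j}}:V^n\cong L^{n+1}_{i,j}$, this restricted arrangement on $L^{n+1}_{i,j}$ is canonically $\mathcal{A}_n$, giving a face-poset bijection between faces of $\mathcal{A}_n$ and those faces of $\mathcal{A}_{n+1}$ that are contained in $L^{n+1}_{i,j}$.

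Using this bijection I would define $\Phi_{L^{n}_{i,j}}$ by extension by zero along the closed stratum: for a representation $(E_F)_F$ in $\mathcal{J}_n$, put $\Phi(E)_G=E_F$ when the face $G$ of $\mathcal{A}_{n+1}$ lies on $L^{n+1}_{i,j}$ and corresponds to $F$, and $\Phi(E)_G=0$ otherwise; the KS generalization maps $\gamma,\delta$ are inherited from $E$ on pairs of adjacent faces both lying on $L^{n+1}_{i,j}$ and are zero on all other pairs. Exactness is immediate because limits in $\mathcal{J}$ are computed pointwise on faces, and fully-faithfulness is formal: a morphism $\Phi(E)\to\Phi(E')$ in $\mathcal{J}_{n+1}$ is forced to be zero on every face off $L^{n+1}_{i,j}$ since source or target vanishes, while its restriction to faces on $L^{n+1}_{i,j}$ is precisely a morphism $E\to E'$ in $\mathcal{J}_n$.

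The main obstacle is verifying that $\Phi(E)$ actually satisfies the monotonicity, invertibility and transitivity relations of \cite{KV}. Relations involving any face off $L^{n+1}_{i,j}$ hold trivially because at least one vector space in the diagram is zero; the delicate case is a codimension-two flat $W$ of $\mathcal{A}_{n+1}$ contained in $L^{n+1}_{i,j}$, whose surrounding gallery may traverse chambers on both sides of $L^{n+1}_{i,j}$. The key point is that in each such gallery the contributions from chambers not lying in $L^{n+1}_{i,j}$ collapse because they factor through a zero vector space, so that the relation at $W$ restricts to the corresponding transitivity relation in $\mathcal{J}_n$. The bookkeeping is most subtle at codimension-two flats of the form $L^{n+1}_{i,m}\cap L^{n+1}_{i,j}=L^{n+1}_{j,m}\cap L^{n+1}_{i,j}$, where two distinct hyperplanes of $\mathcal{A}_{n+1}$ restrict to a single hyperplane of $\mathcal{A}_n$; here one has to check that the combined $\gamma\delta$-relations coming from the two walls in $V^{n+1}$ collapse correctly to the single wall relation in $V^n$ under the bijection of step one. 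Once these checks are in place, $\Lambda_{L^{n+1}_{i,j}}=(\mathcal{Q}^{n+1})^{-1}\circ\Phi_{L^{n}_{i,j}}\circ\mathcal{Q}^{n}$ has all the required properties.
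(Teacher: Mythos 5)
Your proposal is correct and takes essentially the same approach as the paper: define $\Lambda_{L^{n+1}_{i,j}}:=(\mathcal{Q}^{n+1})^{-1}\circ\Phi_{L^{n+1}_{i,j}}\circ\mathcal{Q}^{n}$, so that commutativity holds by construction and full-faithfulness and exactness descend from the combinatorial functor $\Phi_{L^{n+1}_{i,j}}:\mathcal{J}_n\to\mathcal{J}_{n+1}$ (the paper proves this in a separate proposition, which you essentially re-derive inline). The only real difference is that the paper additionally expresses $\Lambda$ as a sheaf-theoretic composite built from the right adjoint $r_*$ of hyperbolic restriction together with $(h_{(-1)}\circ k_*)[-1]$; this extra geometric description is what the paper uses to show compatibility with Verdier duality, a clause present in the body's version of the theorem but not in the statement as given here, so your argument suffices for the stated claim.
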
	
	
	In \cite{RW}, the irreducible objects in 	$\mathscr{P}er(V_{\C}^n,\mathcal{S}^n)$ has been completely classified for $n=1$ and partially for $n=2$. In the same spirit, we obtain the following corollary.  	
\begin{cor}
		Let $\mathcal{F}$ be a constructible sheaves on the hyperplane arrangement $\mathcal{A}_{n+1}$. Then we have the following.
		\begin{enumerate}
			\item  If $\mathcal{F}\in \mathscr{P}er(V_{\C}^{n+1},\mathcal{S}^{n+1})$ is irreducible then  $\dim_{\C}(E_C(\mathcal{F}))=0$ or $1$, where $C$ be any open cell in $\mathcal{C}_{\mathcal{A}_{n+1}}$.   
			\item  Suppose $\mathcal{F}\in \mathscr{P}er(V_{\C}^{n+1},\mathcal{S}^{n+1})$ is irreducible and  $\dim_{\C}(E_C(\mathcal{F}))=0$ where $C$ be any open cell in  $\mathcal{C}_{\mathcal{A}_{n+1}}$. Then there exists $\mathcal{G}\in \mathscr{P}er(V_{\C}^{n},\mathcal{S}^{n})$ and $L_{i,j}^{n+1}\in \mathcal{H}_{\mathcal{A}_{n+1}}$ such that  \[\Lambda_{L_{i,j}^{n+1}}(\mathcal{G})=\mathcal{F}.\]
		\end{enumerate}
	\end{cor}	
	
	\section{Perverse sheaves of real hyperplane arrangements }
	
	In this section we will recall the quiver description of Kapranov and Schechtmann \cite{KV} which we will use in later sections.
	
	Let $X$ be an algebraic variety over $\C$ and $\mathcal{S}$ be a Whiney stratification on $X(\C)$. We denote by $\mathscr{P}er(X,\mathcal{S})$ the category of perverse sheaves with respect to $\mathcal{S}$ satisfying the following perversity conditions. Let $\mathcal{F}\in D^b_{const}(X,\mathcal{S})$.
	\begin{itemize}
		\item $\emph{CONDITION P}^{-}:$ For each $i$, the $i^{th}$ degree cohomology sheaf $
		\underline{H}^{i}(\mathcal{F})$ is supported on a closed subspace of codimension $\geq i$. 
		\item $\emph{CONDITION P}^{+}:$ The sheaf $\underline{	H}^{i}(l^{!}\mathcal{F})=\underline{\mathds{H}}^i_{Z}(\mathcal{F})$ is zero, for all $i<p$ where $l:Z\hookrightarrow X$ is a locally closed analytic submanifold of $X$ of codimension  $p$.  
	\end{itemize}
	
	\subsection{Sheaves on cellular space and linear algebra data}
	
	A topological space $X$ with a filtration $\upchi=\{X_d:\dim X_d=d\geq 1\}$ by closed subspace is said to be a cellular space if $X_d\setminus X_{d-1}$ is the disjoint union of cells of dimension $d$. Let $\mathcal{C}_{X,\upchi}$ be the set of all cells corresponding to the filtration $\upchi$ on $X$ which has the following partial order $\leq$.
	\begin{equation}\label{eq2}
	\text{For any } C \text { and } C'\in \mathcal{C}_{X,\upchi}, C'\leq C\text{ if and only if } C'\subset \bar{C}.
	\end{equation} 
	\begin{defn}	
		A sheaf $\mathcal{F}$ is said to be a cellular sheaf if for any $C\in\mathcal{C}_{X,\upchi}$, $j_{C}^*\mathcal{F}$ is a constant sheaf with stalk a finite-dimensional complex vector space where $j_{C}:C\hookrightarrow X$. 
	\end{defn}	
	Let us denote $Sh_{X,\upchi}$ to be the category of cellular sheaves on $X$ with respect to $\upchi$. For any $\mathcal{F}\in Sh_{X,\upchi}$, consider $\mathcal{F}_{C}:=H^{0}(C,j_{C}^*\mathcal{F})$, the stalk at $C\in \mathcal{C}_{X}$. For any $C$, $C'\in \mathcal{C}_{X}$ satisfying $C'\leq C$, define 
	\begin{equation}
	\gamma_{C'C}:\mathcal{F}_{C'}\rightarrow \mathcal{F}_{C}
	\end{equation}
	as follows. Take $y\in C'\subset V$ to be a sufficiently small contractible open neighborhood of $y$ and consider $x\in V\cap C'$. Let $U$ be a small contractible neighborhood of $x\in C$ in $X$. Since $U\subset V$, define $\gamma_{C'C}$ to be the restriction map 
	\[H^{0}(V,\mathcal{F})=\mathcal{F}_{C'}\rightarrow H^{0}(U,\mathcal{F})= \mathcal{F}_{C}.\]
	Let $\Rep(\mathcal{C}_{X}):=\Fun(\mathcal{C}_{X},\Vec_{\C})$ be the category of functors from  $\mathcal{C}_{X}$ to $\Vec_{\C}$, the category of finite dimensional vector spaces over $\C$.   
	\begin{prop}\cite[Proposition 1.8]{KV} 
		\begin{enumerate}
			\item The following covariant functor is an equivalence of categories  
			\begin{align*}
			Sh_{X,\upchi}\longrightarrow& \Rep(\mathcal{C}_{X})\\
			\mathcal{F}\mapsto& (\mathcal{F}_{C},\gamma_{C'C}). 
			\end{align*}
			\item Let $D^{b}_{\upchi}(Sh_{X})$ be the bounded derived category of sheaves whose cohomology sheaves lies in $Sh_{X,\upchi}$. The above equivalence of categories induces the  following equivalence of the respective derived categories
			\begin{equation}\label{ira1}
			D^{b}(\Rep(\mathcal{C}_{X}))\longrightarrow D^{b}_{\upchi}(Sh_{X}).
			\end{equation}
		\end{enumerate}
	\end{prop}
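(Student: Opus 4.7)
The plan is to construct a quasi-inverse to the functor $\Phi:\mathcal{F}\mapsto(\mathcal{F}_C,\gamma_{C'C})$, verify that the two functors compose to the identities, and then promote the abelian equivalence to the derived level.

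First I would check that $\Phi$ is well-defined on objects: the maps $\gamma_{C'C}$ satisfy transitivity $\gamma_{C''C}=\gamma_{C'C}\circ\gamma_{C''C'}$ whenever $C''\leq C'\leq C$. This follows from the construction via restriction in nested contractible neighborhoods --- picking $U\subset V\subset W$ adapted to $C,C',C''$ respectively, the composite restriction $H^0(W,\mathcal{F})\to H^0(V,\mathcal{F})\to H^0(U,\mathcal{F})$ agrees with the direct restriction $H^0(W,\mathcal{F})\to H^0(U,\mathcal{F})$ because the sections of $\mathcal{F}$ on these contractible opens are simply the stalks at the appropriate cells. Functoriality in $\mathcal{F}$ is automatic.

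Next I would define the candidate quasi-inverse $\Psi:\Rep(\mathcal{C}_X)\to Sh_{X,\upchi}$ by setting
\[
\Psi(V,f)(U):=\Bigl\{(s_C)_{C:\,C\cap U\neq\emptyset}\;\bigm|\;s_C\in V_C,\; f_{C'C}(s_{C'})=s_C\text{ for all }C'\leq C\text{ with }C'\cap U\neq\emptyset\Bigr\},
\]
with the obvious restriction maps. The presheaf axioms are immediate. Verifying the sheaf axiom relies on the local structure of the cellular stratification: each point of $X$ has a fundamental system of contractible open neighborhoods that meet exactly those cells lying above the unique cell containing the point, and on such neighborhoods the gluing of compatible tuples is automatic. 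One then checks that $\Psi(V,f)$ is cellular with stalk $V_C$ along $C$, and that its generalization maps recover $f_{C'C}$.

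Fully-faithfulness and essential surjectivity now both reduce to the reconstruction statement that a cellular sheaf $\mathcal{F}$ is determined by its stalks and generalization maps: for a small contractible $U$, restriction maps yield a bijection between $H^0(U,\mathcal{F})$ and compatible families $(\mathcal{F}_C)_{C\cap U\neq\emptyset}$. Naturality in $U$ and in $\mathcal{F}$ assembles into $\Psi\circ\Phi\simeq\mathrm{Id}$, while $\Phi\circ\Psi\simeq\mathrm{Id}$ is a direct computation of stalks. For part (2), since $\Rep(\mathcal{C}_X)=\Fun(\mathcal{C}_X,\Vec_{\C})$ has enough injectives (and projectives), an exact equivalence of abelian categories lifts to an equivalence of their bounded derived categories, and the natural functor in (\ref{ira1}) is exactly this lift.

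The hard part is the sheaf axiom for $\Psi(V,f)$ together with the identification of its stalks; both rest on a fine analysis of the local topology around each point, requiring a neighborhood basis by contractible opens whose intersection pattern with the cells matches the poset order on $\mathcal{C}_X$. In the setting of real hyperplane arrangements relevant to the rest of the paper this is particularly transparent --- each point has a conical neighborhood basis in which the cells met by the basic open are precisely those above the given cell --- which is what makes the compatible-family description manageable and the sheafification step essentially trivial.
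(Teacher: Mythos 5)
The paper cites this as Kapranov--Schechtman's Proposition~1.8 and gives no proof of its own, so there is no in-paper argument to compare against; I will therefore assess your proposal on its own terms. The overall plan for part~(1) is the right one, but two points need fixing, and the second is a genuine gap.

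First, the presheaf $\Psi(V,f)$ you write down is not a sheaf, so the paragraph where you ``verify the sheaf axiom'' cannot go through as stated. Take $X=\R$ with cells $\{0\}$, $(-\infty,0)$, $(0,\infty)$ and an arbitrary representation $(V_0,V_\pm,f_\pm\colon V_0\to V_\pm)$. For $U=(-1,1)\sqcup(2,3)$, all three cells meet $U$ and $\{0\}$ is the unique minimal one, so your formula gives $\Psi(V,f)(U)\cong V_0$; but the sheaf axiom for the disjoint cover by $(-1,1)$ and $(2,3)$ forces the value $V_0\times V_+$. The defect is that the compatible-family condition is blind to the connected components of each $C\cap U$. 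You do remark at the very end that a ``sheafification step'' might be needed and is ``essentially trivial,'' but that directly contradicts the earlier claim that the formula already satisfies gluing. The honest route is: the formula defines a presheaf whose value on a conical neighborhood basis is correct (your conical-basis observation is exactly what is needed here), one passes to the associated sheaf, and then the stalk computation and the identification of the generalization maps are carried out using that basis.

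Second, and more seriously, your argument for part~(2) proves the wrong statement. Promoting the exact equivalence $\Rep(\mathcal{C}_X)\simeq Sh_{X,\upchi}$ of abelian categories gives an equivalence $D^b(\Rep(\mathcal{C}_X))\simeq D^b(Sh_{X,\upchi})$, where the right-hand side is the bounded derived category of the abelian category of cellular sheaves. But the target in the proposition is $D^b_{\upchi}(Sh_X)$, the full subcategory of the derived category of \emph{all} sheaves on $X$ consisting of complexes with cellular cohomology sheaves. These are not tautologically the same triangulated category, and the nontrivial content of part~(2) is precisely that the canonical functor $D^b(Sh_{X,\upchi})\to D^b_{\upchi}(Sh_X)$ is an equivalence. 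That requires showing full faithfulness (the $\Ext$-groups computed in $Sh_{X,\upchi}$ and in all sheaves agree on cellular objects) and essential surjectivity (every bounded complex with cellular cohomology is quasi-isomorphic to a complex of cellular sheaves). Neither follows from ``$\Rep(\mathcal{C}_X)$ has enough injectives,'' and your proof as written simply conflates the two derived categories. This step has to be supplied before part~(2) can be considered proved.
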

	In other words any object in $\mathcal{F}^{\bullet}\in D^{b}_{\upchi}(Sh_{X})$ determines a complex of vector spaces $(\mathcal{F}_{C}^{\bullet})_{C\in \mathcal{C}_{X}}$ with generalisation map of complex $\gamma_{C'C}^{\bullet}:\mathcal{F}_{C}^{\bullet}\longrightarrow \mathcal{F}_{C'}^{\bullet}$, where $C'\leq C\in \mathcal{C}_{X}$. A category of perverse sheaves can be defined as an abelian subcategory of $D^{b}_{\upchi}(Sh_{X})$. Therefore if $\mathcal{F}^{\bullet}$ is perverse, it determines a set of \emph{linear algebra data} (a complex of vector spaces with generalization maps).

	\subsection{Real hyperplane arrangements}\label{ii}
	
	Let $V$ be a vector space defined over $\R$ of dimension $n\geq1$ and  $\mathcal{H}$ be an arrangement of linear hyperplanes in $V$. Here  we only consider \emph{essential} hyperplane arrangements. In otherwords,  $\underset{H\in\mathcal{H}}{\cap}H=\{0\}$. Each hyperplane $H\in\mathcal{H}$ defined by a linear equation $f_H:V\rightarrow\R$. We choose once and for all a set of equations $\{f_H:H\in\mathcal{H}\}$.

	Each $x\in V$ determines a sign vector $(v_H(x))_{H\in\mathcal{H}}\in\mathcal{H}^{\{+,-,0\}}$, where $v_H(x):=\sgn(f_H(x))$ and $\sgn:\R\rightarrow \{+,-,0\}$ be the sign function. Any two distinct points $x$ and $y\in V$ are said to be lie in the same face if and only if $v_H(x)=v_H(y)$, for all $H\in\mathcal{H}$. Hence we identify the set of faces with the corresponding set of sign vectors associated with $\mathcal{H}$. The faces are locally closed  convex subsets of $V$ and it induces cellular decomposition of $V$. In particular, faces would be called cells depending on the context. Hence the hyperplane arrangement induces a natural filtration $\upchi$ on $V$. 
	
	Let $\mathcal{C}_{\mathcal{H},\upchi}$ be set of cells corresponding to the arrangement of hyperplanes $\mathcal{H}$ in $V$. The following two partial ordering in  $\mathcal{C}_{\mathcal{H},\upchi}$ are equivalent. 
	\begin{enumerate}
		\item For any $C$ and $C'\in \mathcal{C}_{\mathcal{H},\upchi}$, $C'\leq C$ if and only if  $C'\subset \bar{C}$.
		\item The following partial order on $\{+,-,0\}$ $$0\leq +,\hspace{5pt} 0\leq -$$
		gives a partial order on $\mathcal{C}_{\mathcal{H},\upchi}$ as follows. For any $C$ and $C'\in \mathcal{C}_{\mathcal{H},\upchi}$, $$C'\leq C \hspace{19pt}\text{ if and only if } \hspace{10pt} v_H(C')\leq  v_H(C)$$ for any hyperplane $H\in \mathcal{H}$.
	\end{enumerate}
	On the other hand there exist another natural filtration $\bar{\upchi}$ on $V_{\C}$ associated to the $\mathcal{H}$ where  
	$$X_d:=\bigcup_{\dim(C)\leq d}\R^n\oplus iC.$$
	It induces a cell decomposition $\mathcal{C}_{\mathcal{H},\bar{\upchi}}$ on $V_{\C}$ with the above partial order. There is a bijection between $\mathcal{C}_{\mathcal{H},\upchi}$ and $\mathcal{C}_{\mathcal{H},\bar{\upchi}}$ which preserve the partial orders. Hence we identify   $\mathcal{C}_{\mathcal{H},\upchi}$ with $\mathcal{C}_{\mathcal{H},\bar{\upchi}}$.
	
	\subsection{Real hyperplane arrangement and linear algebra data}
	
	An arrangement of hyperplanes $\mathcal{H}$ defines a natural Whitney  stratification on $V_{\C}\cong{\C}^{n}$ as follows. The set of flats is defined by $\mathcal{L}_{\mathcal{H}}:=\{\underset{H\in\mathcal{I}}{\cap}H:\mathcal{I}\subset \mathcal{H}\}.$ For any flats  $L\in \mathcal{L}$, define associated Whitney stratum 
	$L^{\textdegree}_{\C}:=L_{\C}\setminus(\cup_{H\nsupseteq L}H_{\C})$. The set $\mathcal{S}_0:=\{L^{\textdegree}_{\C}:L\in \mathcal{L}_{\mathcal{H}}\}$ gives a Whitney stratification on ${\C}^{n}$. Let us consider  $\mathscr{P}er({\C}^n,\mathcal{S}_0) $ to be the category of perverse sheaves on ${\C}^n$ with respect to $\mathcal{S}_0$  and it is an abelian category of the  triangulated category  $D^b({\C}^n,\mathcal{S}_0)$.
	
	For $\mathcal{F}\in D^{b}_{\mathcal{S}_0}(\C^n)$, consider $\mathcal{R}_{\mathcal{F}}^{\bullet}:=R\Gamma_{\R^n}(\mathcal{F}^{\bullet})[n]\in D^{b}_{\mathcal{C}_{\mathcal{H}}}(\R^n)$ which is a complex of cellular sheaves. Hence by \eqref{ira1}, $\mathcal{R}_{\mathcal{F}}^{\bullet}$  can be identified with  linear algebra data of complexes of vector spaces as follows. Let
	\[E_{C}^{\bullet}(\mathcal{F}):=R\Gamma(C,\mathcal{R}_{\mathcal{F}}^{\bullet})\]
	and 
	\[\gamma_{C'C}:E^{\bullet}_{C'}(\mathcal{F})\rightarrow E^{\bullet}_{C}(\mathcal{F}).\]
	For $\mathcal{F}\in \mathscr{P}er(V_{\C}^n,\mathcal{S}^n)\subset D^{b}_{\mathcal{S}_n}(\C^n)$, $E_{C}^{\bullet}(\mathcal{F})$ is quasi-isomorphic to a complex of vector spaces with only non-zero entry $E_{C}(\mathcal{F})$ at degree zero. In particular, for each $C\in \mathcal{C}_{\mathcal{H},\upchi}$, functor $\mathcal{F}\mapsto E_{C}(\mathcal{F})$ is an exact functor. Hence one obtains  $\gamma_{C'C}:E_{C'}(\mathcal{F})\rightarrow E_{C}(\mathcal{F})$
	as a linear map of vector spaces.

	
	Let $j_{C}:\R^n +i C\hookrightarrow \C^n$, where
	$C\in \mathcal{C}_{\mathcal{H},\upchi}$ of dimension $d$,  define 
	$$\mathcal{E}^{\bullet}_{C}(\mathcal{F}):=j_{C\star}j_{C}^{!}(\mathcal{F})[d]=R\Gamma_{\R^n+i C}(\mathcal{F})[d].$$
	
	%
	
	Recall the associated complex of homological Postnikov system, [Sec 4.A, \cite{KV}], of $\mathcal{F}$ which realises  $\mathcal{F}$ as a total object 
	\begin{equation}\label{3.23}
	\mathcal{E}^{\bullet}(\mathcal{F}):= \bigoplus_{\dim(C)=0}\mathcal{E}^{\bullet}_{C}(\mathcal{F})\rightarrow \bigoplus_{\dim(C)=1}\mathcal{E}^{\bullet}_{C}(\mathcal{F})\rightarrow\dots\rightarrow \mathcal{E}^{\bullet}_{0}(\mathcal{F}).
	\end{equation}
	Now applying the functor $R\Gamma(\C^n,\text{  })$ to the equation \eqref{3.23}, we get the associated total complex of the corresponding Postnikov system in the derived categories of vector spaces, known as \emph{Cousin complex} of $\mathcal{F}$.  
	\begin{equation}
	\bigoplus_{\dim(C)=0}E^{\bullet}_{C}(\mathcal{F})\otimes or(C)\overset{\tilde{\delta}}{\rightarrow} \bigoplus_{\dim(C)=1}E^{\bullet}_{C}(\mathcal{F})\otimes or(C)\overset{\tilde{\delta}}{\rightarrow}\dots\rightarrow E^{\bullet}_{0}(\mathcal{F})\otimes or(C).
	\end{equation}
	For $C'\leq C$ such that $\dim (C')=\dim(C)-1$, one obtains
	\begin{equation}
	\delta_{CC'}:E_{C}\rightarrow E_{C'}.
	\end{equation}
	%
	%
	This map extends to give $ \delta_{CC'}$ for any $C'\leq C$. We can also use the Verdier duality to redefine $\delta_{CC'} := \gamma^{\mathcal F^{\star}}_{C'C}$ where $\mathcal F^{\star} := \underline{R \Hom} (\mathcal F, or_X)$ denotes the Verdier dual of perverse sheaf $\mathcal F$, see [Prop 4.6, \cite{KV}]. This defines a perfect duality in the category of perfect complexes. from now onwards we use the notation $\mathcal{C}_{\mathcal{H}}$ instead of $\mathcal{C}_{\mathcal{H},\upchi}$

	\begin{defn}
		A double representation of $\mathcal{C}_{\mathcal{H}}$ is given by a datum $(E_C,\gamma_{C'C},\delta_{CC'})$, where $E_C$ is a $\C$-vector space for every $C\in \mathcal{C}_{\mathcal{H}}$ and for $C'\leq C$ linear maps
		\[\gamma_{C'C}:E_{C'}\rightarrow E_{C} \text{  and  } \delta_{CC'}:E_{C}\rightarrow E_{C'}
		\]
		Such that 
		\begin{itemize}
			\item $\gamma_{CC}=\delta_{CC}=Id_C$
			\item $\gamma_{CC''}=\gamma_{C'C''}\gamma_{CC'}$ and $\delta_{CC''}=\delta_{C'C''}\delta_{CC'}.$ where $C''\leq C'\leq C$.
			
		\end{itemize}
	\end{defn}
	\begin{defn}
		\begin{itemize}
			\item Let $C_1$ and $C_2$ be two distinct faces of dimensnion $d$ such that the linear span of $C_1$ and $C_2$ are equal. Then $C_1$ and $C_2$ are said to be opposed each other by a face $D$ of dimension $d-1$ if $D\leq C_1$, $D\leq C_2$; and for any hyperplane $H\in \mathcal{H}$, $\sigma(C)_{H}=0$ and $\sigma(A)_{H}=-\sigma(B)_{H}\neq0.$ 
			\item Any triple of faces $(A,B,C)$ is said to be colinear if any line joining a point in $A$ and $C$ lies in $B$.
		\end{itemize}
	\end{defn}
	Suppose $\Rep_2(\mathcal{C}_{\mathcal{H}})$ be the category of double representation of $\mathcal{C}_{\mathcal{H}}$. It is an abelian category. For any cells $A$, $B$ and $C\in\mathcal{C}_{\mathcal{H}}$ satisfying $C\leq A$ and $C\leq B$ we define $\phi_{AB}:=\gamma_{CB}\delta_{AC}$. Consider the full subcategory $\mathcal{J}_{\mathcal{H}}$ consisting of double representations which satisfy  the relations --\emph{monotonicity}, \emph{transitivity} and \emph{invertibility} which are defined as follows.
	\begin{itemize}
		\item \emph{Monotonicity}: For any $C\leq C'$, one has $\gamma_{C'C}\delta_{CC'}=Id_C$. 
		\item \emph{Transitivity}: For any colinear cells $C_1$, $D_1$ and $C_2\in \mathcal{C}_{\mathcal{H}}$, one has 
		$$\phi_{C_1C_2}=\phi_{D_1C_2}\phi_{C_1D_1}$$.
		\item \emph{Invertibility}: If $C_1$ and $C_2$  oppose  each other by $D_1$, then $\phi_{C_1C_2}$ is an isomorphism.
	\end{itemize} 

	We can also define a perfect duality on the category $Rep_2(\mathcal{C}_{\mathcal{H}})$.
	Define the dual of a double representation as $ (V_C, \gamma_{C'C}, \delta_{CC'})^{\star} := (V_C^*, \delta_{CC'}^*, \gamma_{C'C}^*)$, where $V_C^*$ is the vector space dual.  Now we can check that the duality functor preserves all three relations and hence it will restrict to the subcategory $\mathcal{J}_{\mathcal{H}}$ to give the perfect duality.
	\begin{thm}\cite[Theorem 8.1]{KV}\label{tthm}
		Let $\mathcal{H}$ be any real hyperplane arrangement on $\mathds{R}^n$ and $\mathcal{S}$ be a natural Whitney stratification associated to $\mathcal{H}$. There is an equivalence of categories
		\[\mathcal{Q}:\mathscr{P}er(\C^n,\mathcal{S}_0)\longrightarrow\mathcal{J}_{\mathcal{H}},\]
		\[\mathcal{F}\mapsto \left(E_{C}(\mathcal{F}, \gamma_{C'C}(\mathcal{F}),\delta_{CC'}(\mathcal{F}))\right)_{C'\leq C\in\mathcal{C}_{\mathcal{H}}}.\]
		where $\mathcal{J}_{\mathcal{H}}$ is the full subcategory of $\Rep_2(\mathcal{C}_{\mathcal{H}})$. Moreover, the functor $\mathcal Q$ commutes with the duality i.e. $\mathcal Q(\mathcal F^\star) = \mathcal Q (\mathcal F)^\star$.  	
	\end{thm}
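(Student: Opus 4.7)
Since this is Theorem 8.1 of Kapranov--Schechtman [KV], I would not reprove it from scratch, but I outline the strategy I would follow to reconstruct the argument. The plan divides into four parts: (i) show that the image of $\mathcal{Q}$ lies in the subcategory $\mathcal{J}_\mathcal{H}$; (ii) fully faithfulness; (iii) essential surjectivity; (iv) duality compatibility.

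For (i), the definitions of $E_C$, $\gamma_{C'C}$ and $\delta_{CC'}$ have already been spelled out in the preceding subsection; what remains is to verify monotonicity, transitivity, and invertibility. Monotonicity $\gamma_{C'C}\delta_{CC'}=\mathrm{Id}_{E_C}$ is a local statement at a generic point of $C$: one reduces to a single hyperplane wall and checks that the composition of the Postnikov differential with the restriction map returns the identity on the nearby fibre. Transitivity on a colinear triple $(A,B,C)$ is a two-step Postnikov identity, verified by pushing both sides to the common flat and using the functoriality of $R\Gamma_Z$. Invertibility for an opposed pair $(C_1,C_2)$ across $D$ is the essential perversity input: $\phi_{C_1C_2}$ is, up to orientations and shifts, the variation across the wall $D$, and the two perversity conditions $P^{\pm}$ force it to be an isomorphism.

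For fully faithfulness (ii), I would argue via the stratification filtration on $\underline{R\Hom}(\mathcal{F},\mathcal{G})$: the associated hypercohomology spectral sequence degenerates on the $E_1$-page because of $P^{\pm}$, so $\Hom(\mathcal{F},\mathcal{G})$ is recovered from the cellular data $(E_C,\gamma,\delta)$, and the comparison map into $\Hom_{\mathcal{J}_\mathcal{H}}(\mathcal{Q}\mathcal{F},\mathcal{Q}\mathcal{G})$ becomes a tautological bijection. I expect the main obstacle to be essential surjectivity (iii): given combinatorial data $(E_C,\gamma,\delta)\in\mathcal{J}_\mathcal{H}$, one must synthesize a perverse sheaf realizing it. Following [KV], the construction proceeds by induction on the cell filtration of $\C^n$; at each step, extension across a stratum of codimension $p$ is controlled by an obstruction in a two-term complex built from $\gamma$ and $\delta$ on the link, and invertibility together with transitivity makes these obstructions vanish coherently at every flat. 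Perversity of the resulting object is then checked stratum by stratum using the vanishing built into the three relations.

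Finally, for (iv), I would compare definitions directly. Verdier duality exchanges $j_{C\star}j_C^!$ with $j_{C!}j_C^*$ (up to a twist by $or(C)$), which is precisely the interchange sending $E_C(\mathcal{F})$ to the vector-space dual of $E_C(\mathcal{F}^\star)$ with $\gamma$ and $\delta$ swapped. Since the paper's very definition of $\delta$ invokes Proposition 4.6 of [KV], which encodes exactly this Verdier-dual compatibility, the identity $\mathcal{Q}(\mathcal{F}^\star)=\mathcal{Q}(\mathcal{F})^\star$ reduces modulo orientation bookkeeping to that proposition.
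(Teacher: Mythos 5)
Your proposal matches the paper's treatment: the paper does not reprove the equivalence but cites it as Theorem 8.1 of \cite{KV}, and the only new clause — compatibility with duality — is handled exactly as in your part (iv), namely by observing that $\delta_{CC'}$ is defined as $\gamma^{\mathcal F^\star}_{C'C}$ via \cite[Prop.~4.6]{KV} and that the termwise dualization $(V_C,\gamma,\delta)^\star=(V_C^*,\delta^*,\gamma^*)$ on $\Rep_2(\mathcal{C}_{\mathcal{H}})$ preserves monotonicity, transitivity and invertibility. Your reconstruction of parts (i)–(iii) of the \cite{KV} argument is plausible extra detail not actually carried out in the paper.
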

	
	\section{Coxeter type arrangements}
	
	Coxeter hyperplane arrangement of type $\mathcal{A}_n$ is defined in \cite[$\S3$]{PR}. It is also known as a braid arrangement of type $\mathcal{A}_n$. 
	Let $V^n$ be a linear subspace of dimension $n$ over the field of real numbers, defined by $$V^n:=\{\sum_{ i=1}^{n+1} x_i=0\}\subset \mathds{R}^{n+1}.$$   
	Consider $V_{\C}^n$ to be its complexification. Here we consider the Coxeter arrangement of hyperplanes of type $\mathcal{A}_{n}$ on $V^n$.

	\begin{defn}
		A Coxeter arrangement of type $\mathcal{A}_{n}$ is  defined as a collection of hyperplanes $\mathcal{H}_{\mathcal{A}_n}$consisting the following set of equation in $V^n$
		\[L^n_{i,j}:=\{(x_1,x_2,\dots,x_{n+1})\in V^n:x_i-x_j=0,   \hspace{17pt} 1\leq i< j\leq n+1\}.\]
	\end{defn}
	\subsection{Embedding of triangulations }	
	A triangulation of a topological space $X$ is a cellular (CW-complex) structure on $X$. An hyperplane arrangement on a linear space $X$ gives a triangulation of $X$. Here we compare two distinct triangulations of a linear space  $X$ of dimension $n$, obtained by two different hyperplane arrangements $\mathcal{A}_{n}$ and the induced hyperplane arrangement $\mathcal{A}_{n+1}$ on a subspace  $L^{n+1}_{i,j}$. Since the equation of hyperplane in the above two cases are symmetric in $i$ and $j$, it is enough to consider the special case $i=1$ and $j=n+1$.  	
	Consider $$\rho_{L^{n+1}_{1,n+1}}:V^n\rightarrow V^{n+1}$$ defined by 
	$$(x_1\dots,x_n,-(\sum_i x_i))\mapsto((x_1\dots,x_{n-1},x_1,-(\sum_{i\neq1} x_{i}+2x_1)). $$
	We claim that the above map induces  an order preserving embedding 
	\[\iota_{L_{1,n+1}^{n+1}}:\mathcal{C}_{\mathcal{A}_{n}}\longrightarrow \mathcal{C}_{\mathcal{A}_{n+1}}.\]
	
	The hyperplane arrangement of type $\mathcal{A}_{n}$ can be identified with an arrangement $\mathcal{H}_{\R^n}$ on $\R^n$ as follows. 
	For each $n\geq1$, consider the natural isomorphism between real vector spaces of dimension $n$, $\phi_n:{\R}^n\longrightarrow V^n$, defined by $(y_1,y_2,\dots,y_n)\mapsto (x_1,x_2,\dots,x_{n+1})$, where $x_i=y_i$ for all $1\leq i\leq n$ and $x_{n+1}=-\sum_{j=1}^{j=n}y_j$.
	
	For all $H\in\mathcal{H}_n$, $\phi^*(H)$, the pullback hyperplanes are  the following
	\begin{align*}
	A_{i,j}^n:=&Y_i-Y_j \text { for }  1\leq i<j\leq n,\\
	B_i^n:=& 2Y_i+\sum_{j\neq i} Y_j   \text { for }  1\leq i\leq n.
	\end{align*}
	Consider the arrangement of hyperplane $\mathcal{H}_{\R^n}:=\{A_{i,j}^n, B_i^n:1\leq i<j\leq n\}$ of $\R^n$. 
	The above isomorphism induces a bijection between  $\mathcal{H}_{\R^n}$  and $\mathcal{A}_n$. Let us denote $W_n:=\R^{n+1}\cap A_{1,n+1}^{n+1}$ to be the real vector space of dimension $n$ with the following hyperplane  arrangement $\mathcal{H}_{W_n}$:
	\begin{align*}
	\tilde{A}_{i,j}^{n+1}&:=A_{i,j}^{n+1}\cap A_{1,n+1}^{n+1}=&Y_i-Y_j &\text { for }  1< i<j\leq n,\\
	\tilde{A}_{1,j}^{n+1}&:=A_{1,j}^{n+1}\cap A_{1,n+1}^{n+1}=&Y_1-Y_j &\text { for }  1<j\leq n,\\
	\tilde{A}_{i,n+1}^{n+1}&=A_{i,n+1}^{n+1}\cap A_{1,n+1}^{n+1}=&Y_i-Y_{n+1}=Y_i-Y_1 &\text { for }  1<i\leq n,\\
	\tilde{B}_i^{n+1}&:=B_i^{n+1}\cap A_{1,n+1}^{n+1} =& 2Y_1+2Y_i+\sum_{j\neq i,1} Y_j   &\text { for }  1< i\leq n,\\
	\tilde{B}_1^{n+1}&:=B_1^{n+1}\cap A_{1,n+1}^{n+1} =& 3Y_1+\sum_{j\neq i,1} Y_j.   &
	\end{align*}
	Moreover, we  claim  that the natural isomorphism $$\Psi_n:\R^n\longrightarrow W_n,$$
	$$(y_1,\dots,y_n)\mapsto (y_1,\dots,y_n,y_1)$$
	induces an order-preserving bijection $\iota_n:\mathcal{C}_{\R^{n}}\longrightarrow \mathcal{C}_{W_{n}}$. In particular, it is enough to prove that two different arrangements $\mathcal{H}_{\R^n}$ and the pullback of hyperplanes in $\mathcal{H}_{W_n}$ by $\Psi_n$ in $\R^n$, induces an order-preserving bijection between poset of cells in $\R^n$ with respect to $\mathcal{H}_{\R^n}$ and $\mathcal{H}_{W_n}$. For any $P\in \R^n$, define 
	\[E_i(P):=2z_i+\sum_{j\neq 1,i} z_j.\]

	\begin{thm}\label{imp}
		There is an order preserving bijection $\iota_n:\mathcal{C}_{\R^{n}}\longrightarrow \mathcal{C}_{W_{n}}$ which induces an order preserving embedding 
		\[\iota_{L_{1,n+1}^{n+1}}:\mathcal{C}_{\mathcal{A}_{n}}\longrightarrow \mathcal{C}_{\mathcal{A}_{n+1}}.\]
	\end{thm}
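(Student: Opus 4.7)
My plan is to prove the theorem by constructing $\iota_n$ combinatorially, using the standard parametrization of cells of a type $\mathcal{A}$ Coxeter arrangement by ordered set partitions. Via $\phi_n : \R^n \to V^n$, the arrangement $\mathcal{H}_{\R^n}$ is identified with $\mathcal{A}_n$, so the cells of $\mathcal{C}_{\R^n}$ are in bijection with ordered set partitions of $\{1, \dots, n+1\}$: the cell of $x \in V^n$ records the ordered equivalence classes of $x_1, \dots, x_{n+1}$ under the preorder $\leq$. Via $\phi_{n+1}$, the linear space $W_n$ is identified with $L^{n+1}_{1, n+1} \subset V^{n+1}$, and a direct check on the $\tilde{A}$'s and $\tilde{B}$'s listed in the excerpt recognises $\mathcal{H}_{W_n}$ as the restricted arrangement $\mathcal{A}_{n+1}|_{L^{n+1}_{1, n+1}}$. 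Since a cell of $\mathcal{A}_{n+1}$ lies in $L^{n+1}_{1, n+1} = \{x_1 = x_{n+1}\}$ precisely when the elements $1$ and $n+1$ lie in a common block of its ordered set partition, $\mathcal{C}_{W_n}$ is in bijection with ordered set partitions of $\{1, \dots, n+2\}$ in which $\{1, n+1\}$ is contained in a single block.

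With both posets in hand, I define $\iota_n$ as follows: given $\pi \in \mathcal{C}_{\R^n}$ viewed as an ordered set partition of $\{1, \dots, n+1\}$, rename the element $n+1$ occurring in $\pi$ to $n+2$ (preserving its block position), and then adjoin a fresh element $n+1$ to the block containing $1$. This yields an ordered set partition of $\{1, \dots, n+2\}$ with $\{1, n+1\}$ in one block, hence an element of $\mathcal{C}_{W_n}$. The inverse operation, namely delete $n+1$ from its block and rename $n+2$ back to $n+1$, makes $\iota_n$ a bijection. Since the closure order on cells corresponds to refinement of ordered set partitions on each side, and both operations defining $\iota_n$ commute with refinement, $\iota_n$ is order-preserving. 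Composing with the tautological inclusion of cells contained in $L^{n+1}_{1, n+1}$ into cells of $\mathcal{A}_{n+1}$ then gives the claimed embedding $\iota_{L^{n+1}_{1, n+1}}$.

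I expect the main obstacle to be the identification of $\mathcal{C}_{W_n}$ with constrained ordered set partitions: several distinct hyperplanes of $\mathcal{A}_{n+1}$ collapse upon restriction to $L^{n+1}_{1, n+1}$, for instance $L^{n+1}_{1, j} \cap L^{n+1}_{1, n+1} = L^{n+1}_{n+1, j} \cap L^{n+1}_{1, n+1}$ for $2 \leq j \leq n$, and one must verify that these coincidences match exactly the identification $1 \sim n+1$ on the index set. The auxiliary linear forms $E_i(P)$ introduced in the excerpt appear naturally as the essential functionals cutting out these restricted hyperplanes, so carrying out this bookkeeping against the explicit formulas for the $\tilde{A}$'s and $\tilde{B}$'s produces the required sign-vector correspondence and confirms $\iota_n$ as an order isomorphism.
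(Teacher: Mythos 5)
Your proposal is correct and takes a genuinely different, and substantially cleaner, route than the paper. The paper proves the order isomorphism $\iota_n : \mathcal{C}_{\R^n} \to \mathcal{C}_{W_n}$ by an explicit perturbation argument on sign vectors: in Lemmas \ref{ita1} and \ref{iii} it takes a point $P$ realizing a given sign vector for $\mathcal{H}_{\R^n}$, and through a lengthy case analysis (splitting on the sign of $z_1$, on the position of $M_P$, and then constructing an $\epsilon$-shifted point $Q$) produces a point realizing a compatible sign vector for $\mathcal{H}_{W_n}$. You instead identify both posets abstractly: $\mathcal{C}_{\mathcal{A}_n}$ with ordered set partitions of $\{1,\dots,n+1\}$, and the cells of $\mathcal{A}_{n+1}$ lying in $L^{n+1}_{1,n+1}$ with ordered set partitions of $\{1,\dots,n+2\}$ in which $1$ and $n+1$ share a block. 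The latter poset is tautologically the poset of ordered set partitions of an $(n+1)$-element set (fuse $\{1,n+1\}$ into a single symbol), which forces the desired isomorphism with no computation. Your explicit map (rename $n+1\mapsto n+2$, then adjoin a fresh $n+1$ to the block of $1$) is precisely the bijection implicitly singled out by the paper's sign-vector compatibility conditions: matching $v_{A_{i,j}}$ for $i,j\leq n$ fixes the relative positions of $\tilde{x}_1,\dots,\tilde{x}_n$, matching $v_{B_i}$ with $v_{\tilde{B}_i}$ places $\tilde{x}_{n+2}$ where $x_{n+1}$ was, and membership in $L^{n+1}_{1,n+1}$ forces $n+1$ into the block of $1$. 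So the two constructions yield the same $\iota_n$, and the composition with the inclusion of cells contained in $L^{n+1}_{1,n+1}$ into $\mathcal{C}_{\mathcal{A}_{n+1}}$ gives $\iota_{L^{n+1}_{1,n+1}}$ exactly as you say. The only residual work in your route, which you correctly flag, is the routine check that $\mathcal{H}_{W_n}$ is the restricted arrangement $\mathcal{A}_{n+1}|_{L^{n+1}_{1,n+1}}$ and that cells of a restricted arrangement on a flat coincide with cells of the ambient arrangement contained in that flat; this is far lighter than the paper's $\epsilon$-perturbations. One minor imprecision: the closure order on cells matches \emph{coarsening} of ordered set partitions (a smaller-dimensional cell has larger blocks), i.e., the reverse of refinement; you write ``refinement'' without specifying direction, but since $\iota_n$ commutes with merging of consecutive blocks in both directions, the argument is unaffected.
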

	\begin{proof}
		Let us recall that we have an isomorphism $\phi_n:\R^n\longrightarrow V^n$ and consider the hyperplane arrangements $\mathcal{H}_{\R^n}$, obtained by pullback of hyperplanes $L_{i,j}^n$, where $1\leq i<j\leq n$. The hyperplanes $\mathcal{A}_{n+1}$ induce  an arrangement of hyperplanes on $L_{1,n+1}^{n+1}$. We also have an isomorphism $\psi_n:\R^n\longrightarrow L_{1,n+1}^{n+1}$ which pullback the induced hyperplane arrangements and we obtain an arrangement $\mathcal{H}_{W_n}$. Hence we obtain two distinct arrangements $\mathcal{H}_{\R^n}$ and pullback of hyperplanes in $\mathcal{H}_{W_n}$ in $\R^n$. Let $\Lambda$ and $\Gamma$ be the sign vectors associated to $\mathcal{H}_{\R^n}$ and $\mathcal{H}_{W_n}$ respectively, defined as follows.
		\begin{align*}
		\Lambda:\R^n\longrightarrow &\mathcal{H}_{\R^n}^{\{+,-,0\}}\\
		x\mapsto&\left((v_{A_{i,j}^n}(x),v_{B^n_i}(x)\right)_{1\leq i<j\leq n}
		\end{align*}
		and 
		\begin{align*}
		\Gamma:\R^n\longrightarrow &\mathcal{H}_{W_n}^{\{+,-,0\}}\\
		x\mapsto&\left((v_{\tilde{A}_{i,j}^{n+1}}(x),v_{\tilde{B}^{n+1}_i}(x),v_{\tilde{A}_{i,n+1}^{n+1}}(x)\right)_{1\leq i<j\leq n}.
		\end{align*}
		The sign vectors associated to any hyperplane arrangement determine a cell decomposition on $\R^n$ uniquely. Hence, in order to show $\iota_n$ is an order-preserving bijection one needs to show
		
		\begin{itemize}
			\item For any $C\in \mathcal{C}_{\mathcal{H}_{\R^n}}$ and $v(C)\in \mathcal{H}_{n}^{\{+,-,0\}}$ there exist a $C_W\in \mathcal{C}_{W_{n}}$ such that $v(C_W)=(v(C),e_1,\dots,e_{n+1})$, where $e_i=v_{\tilde{A}_{i,n+1}^{n+1}}(C_W)\in\{+,-,0\}$.
			\item The above correspondence is an order preserving bijection.
		\end{itemize}
		
		For any cell $C\in \mathcal{C}_{\R^{n}}$, a  point in $C$ determines an  unique sign vector in $\mathcal{H}_{\R^n}^{\{+,-,0\}}$ and vice-versa. Hence 
		it is enough to prove that for any $P\in C\subset \R^n$, there exists a point $Q\in \R^n$ such that
		
		$$v(Q)=(v(P),e_1,\dots,e_{n}),\hspace{3pt} \text{where  }  e_i=v_{\tilde{A}_{i,n+1}^{n+1}}(Q)\in\{+,-,0\}.$$
		By Lemma (\ref{ita1}) and Lemma (\ref{iii}), there is an bijection between the collection of open cells corresponding to  $\mathcal{H}_{\R^n}$ and $\mathcal{H}_{W_n}$ in $\R^n$.

	\end{proof}

	\begin{lem}\label{ita1}
		There is an order-preserving bijection between the poset of cells of codimension zero associated to the hyperplane arrangements of $\mathcal{H}_{\R^n}$ and $\mathcal{H}_{W_n}$ in $\R^n$.
		
	\end{lem}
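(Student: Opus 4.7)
The plan is to realize both $\mathcal{H}_{\R^n}$ and $\mathcal{H}_{W_n}$ as Coxeter arrangements of type $\mathcal{A}_n$ pulled back to $\R^n$ via different linear isomorphisms $\R^n\to\R^{n+1}/\Delta$, after which matching the resulting chambers to permutations in $S_{n+1}$ produces the required bijection.

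For $\mathcal{H}_{\R^n}$, I would set $u_k:=Y_k$ for $1\le k\le n$ and $u_{n+1}:=-\sum_{k=1}^n Y_k$. A direct check gives $A_{i,j}^n=u_i-u_j$ and $B_i^n=u_i-u_{n+1}$, so $\mathcal{H}_{\R^n}=\{u_i=u_j:1\le i<j\le n+1\}$. The linear map $Y\mapsto(u_1(Y),\dots,u_{n+1}(Y))$ modulo the diagonal $\Delta\subset\R^{n+1}$ gives an isomorphism $\R^n\xrightarrow{\sim}\R^{n+1}/\Delta$: injectivity follows because $u_1=\dots=u_{n+1}=t$ forces $(n+1)t=0$, and the source and target both have dimension $n$. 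Hence $\mathcal{H}_{\R^n}$ is a pullback of the standard braid arrangement, and its open chambers are in canonical bijection with $S_{n+1}$ via the ordering $u_{\sigma(1)}>\dots>u_{\sigma(n+1)}$, giving exactly $(n+1)!$ chambers.

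For $\mathcal{H}_{W_n}$, I would set $v_k:=Y_k$ for $1\le k\le n$ and $v_{n+1}:=-(2Y_1+Y_2+\dots+Y_n)$. A short computation shows $\tilde{A}_{i,j}^{n+1}=v_i-v_j$ for $1\le i<j\le n$; the hyperplanes $\tilde{A}_{i,n+1}^{n+1}=v_i-v_1$ coincide, as subsets of $\R^n$, with $\tilde{A}_{1,i}^{n+1}=v_1-v_i$ (which accounts for the duplication visible in the $\Gamma$-sign vector); and crucially $\tilde{B}_1^{n+1}=v_1-v_{n+1}$ together with $\tilde{B}_i^{n+1}=v_i-v_{n+1}$ for $1<i\le n$. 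Thus $\mathcal{H}_{W_n}=\{v_i=v_j:1\le i<j\le n+1\}$ as a set of hyperplanes. The parallel injectivity check $v_1=\dots=v_{n+1}=t\Rightarrow (n+2)t=0\Rightarrow t=0$ shows $\mathcal{H}_{W_n}$ is also a pullback of the type-$\mathcal{A}_n$ braid arrangement, with $(n+1)!$ open chambers indexed by $\tau\in S_{n+1}$ in the analogous way.

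The desired bijection is then $\sigma\mapsto\sigma$ under these two parametrizations. Order-preservation is vacuous because all open chambers are pairwise incomparable maximal elements of their respective cell posets, so no containment relations can be violated. I expect the main obstacle to be the computation identifying the asymmetric forms $\tilde{B}_1^{n+1}=3Y_1+Y_2+\dots+Y_n$ and $\tilde{B}_i^{n+1}=2Y_1+2Y_i+\sum_{j\ne 1,i}Y_j$ for $i\ge 2$ uniformly as $v_i-v_{n+1}$ under the single choice $v_{n+1}=-(2Y_1+Y_2+\dots+Y_n)$; this step is elementary but requires careful index bookkeeping, after which the rest of the argument reduces to the standard fact that the $n+1$ linear forms placed in general position give exactly the type-$\mathcal{A}_n$ chamber combinatorics.
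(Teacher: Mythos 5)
Your proof is correct, and it takes a genuinely different and substantially more conceptual route than the paper's. The paper proves this lemma by a long case analysis on sign vectors: given a chamber of $\mathcal{H}_{\R^n}$ with a prescribed sign pattern on $\{A_{i,j}^n, B_i^n\}$, it explicitly constructs a point $Q$ realizing the corresponding sign pattern on $\{\tilde{A}_{i,j}^{n+1},\tilde{B}_i^{n+1}\}$, splitting into subcases according to whether $z_1 \lessgtr M_P$ and where $z_1$ sits in the ordering of coordinates. You instead observe that both arrangements are pullbacks of the type-$\mathcal{A}_n$ braid arrangement in $\R^{n+1}/\Delta$ under linear isomorphisms (with $u_{n+1}=-\sum_k Y_k$ and $v_{n+1}=-(2Y_1+\sum_{k\ge 2}Y_k)$ respectively); the verifications $B_i^n = u_i - u_{n+1}$, $\tilde B_1^{n+1}=v_1-v_{n+1}$, $\tilde B_i^{n+1}=v_i-v_{n+1}$, and the coincidence $\tilde{A}_{i,n+1}^{n+1}=\pm\tilde{A}_{1,i}^{n+1}$ all check out, and both kernel computations ($(n+1)t=0$ and $(n+2)t=0$) are right. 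This immediately identifies both chamber sets with $S_{n+1}$, and the bijection $\sigma\mapsto\sigma$ matches signs of corresponding forms, so it agrees with the paper's $\iota_n$. Your approach buys two things beyond brevity: the bijection is canonical rather than constructed point-by-point, and — since a linear isomorphism of arrangements induces an isomorphism of the entire face poset, not just the top cells — your observation actually proves Lemma \ref{iii} and Theorem \ref{imp} at the same stroke, making the paper's separate codimension-$\geq 1$ argument unnecessary. The only thing worth stating explicitly, which you gesture at, is that for the codimension-zero statement alone, "order-preserving" is vacuous since open chambers form an antichain; the real content your proof delivers is the poset isomorphism on all faces.
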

	\begin{proof}
		Let $C\in \mathcal{C}_{\mathcal{H}_{\R^n}}$ be an open cell. An open cell in $\R^n$,  corresponding to $\mathcal{H}_{\R^n}$  determines a sign vector with only non-zero entries. In particular, for any $P\in C$, all of the entries of $v_{A_{i,j}^{n}}(P)$ are either $+ve$ or $-ve$ sign,  for all $1\leq i\neq j\leq n$. Hence for any point $P=(z_1,\dots,z_n)\in C$, one has the following  presentation

		\begin{equation}\label{ra11}
		z_{\lambda_1}<z_{\lambda_2}<\dots<z_{\lambda_i}<z_{\lambda_{i+1}}<\dots<z_{\lambda_{n}},
		\end{equation}
		The sign vector $v(C)$ must determines one of the following relations
		\begin{enumerate}\label{ii1}
			\item $v_{B_i^{n}}(P)>0$, for all $i$
			\item  $v_{B_i^{n}}(P)<0$, for all $i$ or
			\item  $v_{B_{i_j}^n}(P)>0$ and
			$v_{B_{m_j}^n}(P)<0$, where $1\leq i_j$, $m_j\leq n$. 
		\end{enumerate} 
		Then it is enough to prove that there exist a point $Q=(z_1',\dots z_n')$ satisfying \eqref{ra11} and also satisfy one of the following relations
		\begin{enumerate}
			\item[I.] For all $1\leq i\leq n$, if $v_{B_i^{n}}(P)>0$,  then $v_{B_i^{n+1}}(Q)>0$,
			\item[II.]  For all $1\leq i\leq n$, if $v_{B_i^{n}}(P)<0$,  then $v_{B_i^{n+1}}(Q)<0$ or
			\item[III.]  If	$v_{B_{i_j}^{n}}(P)>0$ and
			$v_{B_{m_j}^n}(P)<0$, then $v_{\tilde{B}_{i_j}^{n+1}}(Q)>0$ and
			$v_{\tilde{B}_{m_j}^{n+1}}(Q)<0$  where $1\leq i_j$, $m_j\leq n$. 
		\end{enumerate}
		Let us assume that $z_1\geq0$. 
		\begin{itemize}
			\item Case [I]:
			
			Suppose we have $v_{B_i^{n}}(P)>0$, for all $i$. Then Choose $Q=P$, one has $v_{B_i^{n+1}}(Q)>0$.
			\item Case [III]:
			Let  $v_{B_{i_j}^{n}}(P)>0$ and
			$v_{B_{m_j}^n}(P)<0$, where $1\leq i_j$, $m_j\leq n$.
			
			Hence,
			\[z_1+E_{i_j}(P)>0,\hspace{5pt} z_1+E_{m_j}(P)<0.\]
			Consider
			\[M_P:=Min_{m_j}\{-E_{m_j}(P)/2\}>0.\]
			If $z_1< M_P$, consider $Q=P$ and $v_{\tilde{B}_{i_j}^{n+1}}(Q)>0 \hspace{3pt} \text{and }
			v_{\tilde{B}_{m_j}^{n+1}}(Q)<0$ holds. 
			Suppose $z_1> M_P>0$. Then the entries of the point $P$ must have one of the following presentations  
			\begin{equation}\label{ra1}
			0<M_P< z_{\lambda_1}=z_1<z_{\lambda_2}<\dots<z_{\lambda_i}<z_{\lambda_{i+1}}<\dots<z_{\lambda_{n}},
			\end{equation}
			or, for some $1\leq i\leq k-1$ and $0\leq j\leq i-2$,
			\begin{equation}\label{ta11}
			z_{\lambda_1}<\dots<z_{\lambda_{k-i-1}}<0<z_{\lambda_{k-i}}<\dots< z_{\lambda_{k-i+j}}<M_P< z_{\lambda_{k-i+j+1}}<\dots <z_{\lambda_{k-1}}=z_1<\dots<z_{\lambda_{n}}.
			\end{equation}
			If the entries of the point $P$ satisfy the presentation given in \eqref{ra1}, consider $z_1'$ any real number such that $0<z_1'<M_P<  z_{\lambda_1}=z_1$ and $z_t'=z_t$ for all $1<t\leq n$. Since $E_{m_j}(Q)=E_{m_j}(P)$ and $z_1'<M_P\leq -E_{m_j}/2$ implies $2z_1'+E_{m_j}(P)<0$. Thus 
			$$B_{m_j}^{n+1}(Q)=2z_1'+E_{m_j}(P)<0.$$

			Suppose the entries of the point $P$ is given by \eqref{ta11}. 
			Let \[m:=z_{\lambda_{k-i+j+1}}-M_P.\] 
			Choose $\epsilon>0$ such that 
			\[z_{\lambda_{k-i+j}}<M_p-\epsilon<M_P.\]
			Pick $$z_{\lambda_{k-i+j+1}}'=z_{\lambda_{k-i+j+1}}-m-{\epsilon}=M_P-\epsilon,$$
			$$z_{\lambda_{k-i+j+2}}'=z_{\lambda_{k-i+j+1}}-m-{\epsilon}/2=M_P-\epsilon/2,$$
			and finally
			$$z_{\lambda_{k-2}}'=z_{\lambda_{k-i+j+1}}-m-\frac{\epsilon}{j-i}=M_P-\frac{\epsilon}{j-i}$$
			and $z_{\lambda_t}'=z_{\lambda_t}$, for all  $t\leq k-i+j$ and  $t\geq\lambda_{k}$. In particular one has 
			\begin{equation}\label{tataa}
			z_t      \leq z_{t}'.                                                                \end{equation}

			There always exist a real number $z_1'$ such that $M_P-\frac{\epsilon}{j-i}<z_1'<M_P$.
			Since, $z_1'<M_P\leq -E_{m_j}/2$ implies $2z_1'+E_{m_j}(P)<0$.  Hence
			\begin{align*}
			2z_1'+E_{m_j}(P')=&2z_1'+2z_{m_j}'+\sum_{l\neq m_j,1}z_l'
			\leq&2z_1'+2z_{m_j}+\sum_{l\neq m_j,1}z_l\hspace{5pt} (\text{ using } \eqref{tataa}  ) \\
			& &=2z_1'+E_{m_j}(P)<0.
			\end{align*}
			
			Hence $\tilde{B}_{m_j}^{n+1}(Q)<0$ and $\tilde{B}_{i_j}^{n+1}(Q)>0$ for $1\leq i_j$, $m_j\leq n$.
			
			The case $z_1=M_p$ is similar to the above case.

			\item Case [II]:
			
			Let $v_{B_i^n}(P)<0$ for all $1\leq i\leq n$. If $z_1<M_P$, consider $Q=P$ and 
			$v_{B_{i}^{n+1}}(Q)<0$ holds. Otherwise we can proceed as follows. Since $v_{\tilde{B}_{i}^{n+1}}(Q)<0$ for all $1\leq i\leq n$, the entries of the the point $P$ can only have the presentation in \eqref{ta} (Since $z_i$ are all positive). Now the proof follows by doing steps as in the Case [III].

		\end{itemize}
		
		The case $z_1<0$ is  similar to the case $z_1\geq0$, so we skip the proof here.
		Hence if $C$ be any open cell in $\mathcal{C}_{\R^n}$ satisfy any of the condition, there exist an cell $C'\in \mathcal{C}_{W_n}$ satisfy the similar condition.	
	\end{proof}
	
	\begin{lem}\label{iii}
		There is an order-preserving bijection between the poset of cells of codimension $\geq1$ associated to the hyperplane arrangements of $\mathcal{H}_{\R^n}$ and $\mathcal{H}_{W_n}$ in $\R^n$.
	\end{lem}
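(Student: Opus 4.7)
The plan is to run an argument parallel to Lemma \ref{ita1}, now allowing some entries of the sign vectors to equal $0$. Given a cell $C\in\mathcal{C}_{\mathcal{H}_{\R^n}}$ of codimension $k\geq 1$ and a point $P=(z_1,\dots,z_n)\in C$, my task is to produce $Q=(z_1',\dots,z_n')\in\R^n$ whose signs with respect to $\mathcal{H}_{W_n}$ match those prescribed by $v(C)$. Explicitly, $Q$ must satisfy $\sgn(z_i'-z_j')=\sgn(z_i-z_j)$ for all $1\leq i<j\leq n$, together with $\sgn\bigl(2z_1'+2z_i'+\sum_{k\neq 1,i}z_k'\bigr)=\sgn(B_i^n(P))$ for each $i>1$ and $\sgn\bigl(3z_1'+\sum_{k\neq 1}z_k'\bigr)=\sgn(B_1^n(P))$. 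Once such a $Q$ exists the remaining signs $e_i=\sgn(z_i'-z_1')$ are forced, and the cell $C_W$ containing $Q$ is the required image of $C$.

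The first family of conditions amounts to asking that the ordered set partition of $\{z_i'\}$ coincide with that of $\{z_i\}$. I would therefore fix this partition and parametrise $Q$ by a choice of strictly ordered representative for each block. The second family of conditions then reduces to a system of sign (including zero) constraints on these representatives. A preliminary structural observation is useful: whenever $B_i^n(P)=B_j^n(P)=0$ we have $z_i=z_j$, so all indices whose $B$-coordinate vanishes must sit in a single block of the partition, of common value $-\sum_k z_k$. This severely restricts the admissible zero patterns on the $B$-side and makes the case analysis tractable.

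With the partition fixed, the remaining analysis mirrors Cases [I], [II] and [III] of Lemma \ref{ita1}: I would adjust $z_1'$, together with a small perturbation inside relevant blocks when necessary, so that each of the $\tilde B$-expressions picks up its prescribed sign, zero signs included. The inverse direction, sending a cell of $\mathcal{H}_{W_n}$ back to one of $\mathcal{H}_{\R^n}$, is produced by a symmetric construction, and order preservation is automatic since the sign-vector correspondence is componentwise. The main obstacle I anticipate is the sub-case $B_1^n(P)=0$: the equation $3z_1'+\sum_{k\neq 1}z_k'=0$ pins $z_1'$ exactly, eliminating the freedom that Lemma \ref{ita1} uses to keep $z_1'$ below the threshold $M_P$. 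To recover flexibility I would instead tune the representatives of the blocks other than the one containing index $1$, invoking a careful variant of the $M_P$-threshold argument after substituting the linear constraint into the remaining $\tilde B$-expressions.
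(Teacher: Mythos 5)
Your plan is essentially the paper's proof: both arguments reduce to the perturbation scheme of Lemma~\ref{ita1}, identify equalities among coordinates to collapse the ordered tuple to a strict presentation (your ``ordered set partition''), and note that all indices with $B_i^n(P)=0$ must share a coordinate value. The obstacle you flag in the sub-case $B_1^n(P)=0$ is in fact benign and needs no extra argument: the defining linear relation $\sum_j z_j = -z_1$ gives $B_i^n(P)=z_i-z_1$ for every $i>1$, and dually, imposing $\tilde B_1^{n+1}(Q)=0$ (i.e. $\sum_{j\geq 2}z_j'=-3z_1'$) forces $\tilde B_i^{n+1}(Q)=z_i'-z_1'$ for $i>1$, so all remaining $\tilde B$-signs collapse to $A$-signs already guaranteed by matching the ordered partition, and the $M_P$-threshold freedom is not needed at all.
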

	\begin{proof}
		Let $C\in\mathcal{C}_{\mathds{R}^n}$ be a cell of codimension $\geq1$ such that at least one of entries of the sign vectors of $v(C)$ is zero. Equivalently, for every point $P\in C$, such that either $A_{i,j}^n(P)=0$ or $B^{n}_i(P)=0$, for some $1\leq i<j\leq n$. 
		Let us consider the following cases.
		\begin{itemize}
			\item Case [I]:
			None of $A_{i,j}^n$ is vanishing at $P$. In particular, the entries of the point $P\in C$ has the following representation
			\begin{equation}\label{ra13}
			z_{\lambda_1}< z_{\lambda_2}< \dots< z_{\lambda_i}< z_{\lambda_{i+1}}<\dots< z_{\lambda_{n}}.
			\end{equation} 
			Then $B^{n}_i(P)=0$ for exactly one $i$ satisfying $1\leq i\leq n$. Indeed, for any $P=(z_1,z_2,\dots,z_n)$ and $a\leq i_1$, $i_2\leq n$, $$v_{B_{i_1}^n}(P)=v_{B_{i_2}^n}(P)=0,$$ 
			implies $z_{i_1}=z_{i_2}$. 
			Hence $A_{i_1,i_2}^n(P)=0$, a contradiction.
			The entries of the sign vector $v(C)$ corresponding to $B_i^n$ must be of the form 
			\begin{equation}\label{equt}v_{B_{i_j}^{n}}(P)>0, \hspace{5pt} v_{B_{l}^{n}}(P)=0\hspace{3pt} \text{ and }
			v_{B_{m_j}^n}(P)<0,\end{equation}
			where $1\leq i_j$, $l$, $m_j\leq n$ for all $j$. 
			Our claim there is a point $Q\in\R^n$ satisfying the presentation in \eqref{ra13} such that \begin{equation}\label{tate}v_{\tilde{B}_{i_j}^{n+1}}(Q)>0, \hspace{5pt} v_{\tilde{B}_{l}^{n+1}}(Q)=0\hspace{3pt} \text{ and }
			v_{\tilde{B}_{m_j}^{n+1}}(Q)<0,\end{equation}
			
			From \eqref{equt}, one obtains for $1\leq i_j$, $l$, $m_j\leq n$
			\begin{equation}
			z_1+E_{i_j}(P)>0, \hspace{5pt} z_1+E_{m_j}(P)<0 \hspace{3pt}\text{ and }z_1=-E_{l}(P).
			\end{equation}
			In particular one has 
			\begin{equation}\label{taa1}
			z_1=-E_l(P)<-E_{m_j}(P) \hspace{3pt} \text{and  } -E_l(P)<z_{\lambda_n}\hspace{3pt} \text{for all } m_j.
			\end{equation}
			Let us assume that $z_1\geq0$.  
			\emph{Sub-Case 1}:
			The entries of $P$ must be of the following form 
			\begin{equation}\label{ta}
			z_{\lambda_1}<\dots<z_{\lambda_{k-i-1}}<0<z_{\lambda_{k-i}}<\dots< z_{\lambda_{k-i+j}}<-E_{l}/2< z_{\lambda_{k-i+j+1}}<\dots <z_{\lambda_{k-1}}=z_1<\dots<z_{\lambda_{n}},
			\end{equation}
			where $1\leq i\leq k-1$ and $0\leq j\leq i-2$.
			
			Let \[m:=z_{\lambda_{k-i+j+1}}+E_l/2.\]
			Choose $\epsilon>0$, small enough so that
			\[\dots<z_{\lambda_{k-i+j}}<z_{\lambda_{k-i+j+1}}-m-\epsilon<-E_l(P)/2<z_{\lambda_{k-i+j+1}}<\dots\]
			For $l\notin\{\lambda_{k-i+j+1},\dots,\lambda_{k-1}\}$,
			pick $$z_{\lambda_{k-i+j+1}}'=z_{\lambda_{k-i+j+1}}-m-{\epsilon}=\frac{-E_l}{2}-\epsilon,$$
			$$z_{\lambda_{k-i+j+2}}'=z_{\lambda_{k-i+j+1}}-m-{\epsilon}/2=\frac{-E_l}{2}-\epsilon/2,$$
			and finally
			$$z_{\lambda_{k-2}}'=z_{\lambda_{k-i+j+1}}-m-\frac{\epsilon}{j-i}=\frac{-E_l}{2}-\frac{\epsilon}{j-i}$$
			and $z_{\lambda_t}'=z_{\lambda_t}$, for all  $t\leq k-i+j$ and  $n>t\geq\lambda_{k}$ and 
			$$z_n'=z_n+\sum_{2\leq t\leq i-j-1}(z'_{\lambda_{k-t}}-z_{\lambda_{k-t-1}}).$$ 
			In particular, one has 
			\begin{equation}\label{tataa}
			z_t'      \leq z_{t}                        \end{equation}
			Since $E_{\lambda_k}>E_{\lambda_{k+1}}$ for all $k$, one necessarily has 
			\begin{equation}
			l\neq \lambda_n \hspace{3pt} \text{and }  m_j\neq \lambda_n \text{ for all j}. 
			\end{equation}
			Hence for $l\notin\{\lambda_{k-i+j+1},\dots,\lambda_{k-1},\lambda_n\}$,
			\begin{align*}
			\tilde{B}_{l}^{n+1}(Q)=&2z_1'+2z_l'+\sum_{t\neq1,l}z_t'\\
			=&2z_1'+2z_l'+\sum_{t\in[1, k-i+j]\cup[k+1,n-1]}z_{\lambda_t}+ \left(z_{\lambda_{k-i+j+1}}'+\dots+ z_{\lambda_{k-1}}'\right)+z_{\lambda_n}'\\
			=& 2z_1'+E_l(Q)\\
			=&2z_1'+E_l(Q)=0
			\end{align*} 	
			On the other hand if there exist a  $j_0$ such that $m_{j_0}\in\{\lambda_{k-i+j+1},\dots,\lambda_{k-1}\}$,
			\begin{align*}
			\tilde{B}_{\lambda_{m_{j_0}}}^{n+1}(Q)=& 2z_1'+E_{\lambda_{m_{j_0}}}(Q)\\
			=&2z_1'+2z_{\lambda_{m_{j_0}}}'+\sum_{t\neq \lambda_{m_{j_0}}, 1} z_t'\\
			=&2z_1'+2z_{\lambda_{m_{j_0}}}+\sum_{t\neq \lambda_{m_{j_0}}, 1}z_t-(z_{\lambda_{m_{j_0}}}-z_{\lambda_{m_{j_0}}}'
			)\\
			=&2z_1'+E_{\lambda_{m_{j_0}}}(P)-(z_{\lambda_{m_{j_0}}}-z_{\lambda_{m_{j_0}}}')\\
			=&-E_l(P)+E_{m_{j_0}}(P)-(z_{\lambda_{m_{j_0}}}-z_{\lambda_{m_{j_0}}}')<0 \hspace{3pt} \text{ by \eqref{taa1}} \text{ and }\eqref{tataa}.
			\end{align*}
			For $m_{j_0}<\lambda_{k-i+j}$ and $\lambda_{k-1}<m_{j_0}<\lambda_n$,
			\begin{align*}
			\tilde{B}_{m_{j_0}}^{n+1}(Q)=& 2z_1'+E_{\lambda_{m_{j_0}}}(Q)\\
			=&2z_1'+2z_{m_{j_0}}'+\sum_{t\neq \lambda_{m_{j_0}}, 1}z_t'\\
			=&2z_1'+2z_{m_{j_0}}+\sum_{t\neq \lambda_{m_{j_0}}, 1}z_t\\
			=&2z_1'+E_{\lambda_{m_{j_0}}}(P)\\
			=&-E_l(P)+E_{m_j}(P) <0 \hspace{3pt} (by  \eqref{taa1}).
			\end{align*}
			If $l\in\{\lambda_{k-i+j+1},\dots,\lambda_{k-1}\}$, in particular $l=\lambda_{k-i+j+t_0}$ where $1\leq t_0\leq i-j-1$. Then define $z'_{\lambda_t}=z_{\lambda_t}$ as in the previous case for $t\neq\lambda_n$ and  $$z_n':=z_n+\sum_{t\neq 1,j-i+t_0}\left(z_{\lambda_{k-t}}-z_{\lambda_{k-t-1}}\right)+2\left(z_{\lambda_{k-i+j+t_0}}-z_{\lambda_{k-i+j+t_0-1}}\right).$$
			Now one necessarily has $\tilde{B}_{l}^{n+1}(Q)=0$. Since $B_{\lambda_{i}}^n(P)<B_{\lambda_{i+1}}^n(P)$ for all $i$, one necessarily has $B_{k-i+j+t_0-1}^{n}(Q)<0$. Hence it enough to show that  $$\tilde{B}_{\lambda_{k-i+j+t_0-1}}^{n+1}(Q)<0.$$
			Now
			\begin{align*}
			\tilde{B}_{\lambda_{k-i+j+t_0-1}}^{n+1}(Q)=& 2z_1'+E_{\lambda_{\lambda_{k-i+j+t_0-1}}}(Q)\\
			=&2z_1'+2z_{\lambda_{\lambda_{k-i+j+t_0-1}}}'+\sum_{t\neq \lambda_{\lambda_{k-i+j+t_0-1}}, 1}z_t'\\
			=&2z_1'+2z_{\lambda_{\lambda_{k-i+j+t_0-1}}}+\sum_{t\neq \lambda_{k-i+j+t_0-1}, 1}z_t\\&-\left(z_{\lambda_{\lambda_{k-i+j+t_0-1}}}-z_{\lambda_{\lambda_{k-i+j+t_0-1}}}'\right)+\left(z_{\lambda_{\lambda_{k-i+j+t_0}}}-z_{\lambda_{\lambda_{k-i+j+t_0}}}'\right)\\
			=&2z_1'+E_{\lambda_{k-i+j+t_0-1}}(P)+\left(z_{\lambda_{\lambda_{k-i+j+t_0}}}-z_{\lambda_{\lambda_{k-i+j+t_0-1}}}\right)\\&+\left(z_{\lambda_{\lambda_{k-i+j+t_0-1}}}'-z_{\lambda_{\lambda_{k-i+j+t_0}}}'\right)\\
			=&2z_1'+E_{\lambda_{k-i+j+t_0}}(P)+\left(z_{\lambda_{\lambda_{k-i+j+t_0-1}}}'-z_{\lambda_{\lambda_{k-i+j+t_0}}}'\right) <0. \hspace{3pt} 
			\end{align*}
			\item Case [II] 
			
			Suppose the entries of the point $P\in C$ has the following representation
			\begin{equation}\label{ra14}
			z_{\lambda_1}< z_{\lambda_2}= \dots= z_{\lambda_{i_1}}< z_{\lambda_{i_1+1}}=z_{\lambda_{i_2}}<\dots< z_{\lambda_{n}}.
			\end{equation}
			The entries of the sign vector $v(C)$ corresponding to $B_i^n$ has the following possibilities. Either 
			\begin{enumerate}
				\item all of the  $v_{B_i^{n}}(P)>0$,
				\item all of  the  $v_{B_i^{n}}(P)<0$, or
				\item few of	$v_{B_{i_j}^{n}}(P)>0$ and
				$v_{B_{m_j}^n}(P)<0$,
				\item 	$v_{B_{i_j}^{n}}(P)>0$, $v_{B_{l_j}^{n}}(P)=0$ and
				$v_{B_{m_j}^n}(P)<0$, where $1\leq i_j$, $l$, $m_j\leq n$ for all $j$. 
			\end{enumerate}	
			Our claim is the following 
			\begin{enumerate}
				\item[I.] For all $1\leq i\leq n$, if $v_{B_i^{n}}(P)>0$,  then $v_{\tilde{B}_i^{n+1}}(Q)>0$,
				\item[II.]  For all $1\leq i\leq n$, if $v_{B_i^{n}}(P)<0$,  then $v_{\tilde{B}_i^{n+1}}(Q)<0$ or
				\item[III.]  If	$v_{B_{i_j}^{n}}(P)>0$ and
				$v_{B_{m_j}^n}(P)<0$, then $v_{\tilde{B}_{i_j}^{n+1}}(Q)>0$ and
				$v_{\tilde{B}_{m_j}^{n+1}}(Q)<0$  where $1\leq i_j$, $m_j\leq n$. 
				\item[IV.] If $v_{B_{i_j}^{n}}(P)>0$, $v_{B_{l_j}^{n}}(P)=0$ and
				$v_{B_{m_j}^{n}}(P)<0$, then $v_{\tilde{B}_{i_j}^{n+1}}(Q)>0$, $v_{\tilde{B}_{l_j}^{n+1}}(Q)=0$ and
				$v_{\tilde{B}_{m_j}^{n+1}}(Q)<0$,
			\end{enumerate}
			The proof of claim [I], [II]and [III] is similar to the case \ref{ita1}. The proof of claim [IV] can be reduced to the proof in Case [I] as follows.
			One has 
			$$v_{B_{i_1}^n}(P)=v_{B_{i_1}^n}(P)=0$$ 
			implies $z_{i_1}=z_{i_2}$. Moreover $v_{B_{l_1}^n}(P)<v_{B_{l_2}^n}(P)$ if  $z_{l_1}<z_{l_2}$. Therefore identifying equalities in \eqref{ra14} can be reduced to \eqref{ra11} and the condition (3) can be reduced to the Case[I]. 
		\end{itemize}
	\end{proof}
	Recall that the hyperplane arrangement  $\mathcal{H}_{\R^n}$ and $\mathcal{A}_n$ are isomorphic since one is the pullback of the other under the natural linear  isomorphism $\phi_n$, for all $n$. For any $1\leq i<j\leq n+1$, let  $W_{n}^{i,j}:=\R^{n+1}\cap \phi_{n+1}^*(L_{i,j}^{n+1})$ be a linear subspace of dimension $n$ with an  arrangement $\mathcal{H}_{W_{n}^{i,j}}$ induced by $\mathcal{A}_{n+1}$.  Similar to the case $i=1$ and $j=n+1$ there is a natural isomorphism of linear spaces $\R^{n}\longrightarrow W_{n}^{i,j}$ which induces an embedding of corresponding posets.
	\begin{thm}\label{ita}
		Let $W_{n}^{i,j}:=\R^{n+1}\cap A_{i,j}^{n+1}$. There is an order preserving bijection $\iota_n^{i,j}:\mathcal{C}_{\R^{n}}\longrightarrow \mathcal{C}_{W_{n}^{i,j}}$ which induces an order preserving embedding 
		\[\iota_{L_{i,j}^{n+1}}:\mathcal{C}_{\mathcal{A}_{n}}\longrightarrow \mathcal{C}_{\mathcal{A}_{n+1}}.\]	
	\end{thm}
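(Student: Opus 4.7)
The plan is to deduce Theorem \ref{ita} from the already-established special case Theorem \ref{imp} by exploiting the natural action of the symmetric group $S_{n+1}$ on the Coxeter arrangement $\mathcal{A}_{n+1}$. The symmetric group acts on $\R^{n+1}$ by permutation of coordinates and preserves $V^{n+1}$ since the defining relation $\sum_k x_k = 0$ is symmetric. For $\sigma \in S_{n+1}$, write $\tilde\sigma \colon V^{n+1} \to V^{n+1}$ for the induced linear automorphism. A direct inspection of the defining equations shows $\tilde\sigma(L_{k,l}^{n+1}) = L_{\sigma^{-1}(k),\,\sigma^{-1}(l)}^{n+1}$ (with indices reordered to satisfy the convention $i < j$), so $\tilde\sigma$ permutes $\mathcal{H}_{\mathcal{A}_{n+1}}$ and therefore induces an order-preserving automorphism of the cell poset $\mathcal{C}_{\mathcal{A}_{n+1}}$, which we again denote by $\tilde\sigma$.

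Given the pair $(i,j)$ with $1 \leq i < j \leq n+1$, choose any permutation $\sigma \in S_{n+1}$ with $\sigma(i) = 1$ and $\sigma(j) = n+1$; then $\tilde\sigma(L_{i,j}^{n+1}) = L_{1,n+1}^{n+1}$, and consequently $\tilde\sigma^{-1}$ restricts to an order-preserving bijection between the sub-posets $\mathcal{C}_{W_n^{1,n+1}} = \mathcal{C}_{W_n}$ and $\mathcal{C}_{W_n^{i,j}}$ inside $\mathcal{C}_{\mathcal{A}_{n+1}}$. Theorem \ref{imp} supplies an order-preserving bijection $\iota_n \colon \mathcal{C}_{\R^n} \to \mathcal{C}_{W_n^{1,n+1}}$, together with the resulting embedding $\iota_{L_{1,n+1}^{n+1}} \colon \mathcal{C}_{\mathcal{A}_n} \hookrightarrow \mathcal{C}_{\mathcal{A}_{n+1}}$. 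I would then define
\[
\iota_n^{i,j} \;:=\; \tilde\sigma^{-1}\!\circ \iota_n \colon \mathcal{C}_{\R^n} \longrightarrow \mathcal{C}_{W_n^{i,j}}, \qquad \iota_{L_{i,j}^{n+1}} \;:=\; \tilde\sigma^{-1}\!\circ \iota_{L_{1,n+1}^{n+1}} \colon \mathcal{C}_{\mathcal{A}_n} \longrightarrow \mathcal{C}_{\mathcal{A}_{n+1}}.
\]
Both maps are order-preserving as compositions of order-preserving maps, and the second is an embedding because $\tilde\sigma^{-1}$ is an automorphism of the target poset.

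The only point requiring care is the identity $\tilde\sigma(L_{k,l}^{n+1}) = L_{\sigma^{-1}(k),\sigma^{-1}(l)}^{n+1}$, which I would verify from the definition: if $\tilde\sigma$ sends $(x_1,\dots,x_{n+1})$ to $(x_{\sigma^{-1}(1)}, \dots, x_{\sigma^{-1}(n+1)})$, then the preimage under $\tilde\sigma$ of the locus $\{y_k = y_l\}$ is $\{x_{\sigma^{-1}(k)} = x_{\sigma^{-1}(l)}\}$. Beyond this, the argument is bookkeeping: one checks that the identifications $\phi_n$ and $\Psi_n$ of Theorem \ref{imp} carry over equivariantly under $\tilde\sigma$, so no repetition of the combinatorial case analysis from Lemmas \ref{ita1} and \ref{iii} is needed.

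The main obstacle, to the extent there is one, lies in this bookkeeping: reordering indices to keep them in the $i<j$ convention when $\sigma$ reverses pairs, and tracking how the coordinate isomorphism $\phi_{n+1} \colon \R^{n+1} \to V^{n+1}$ interacts with the permutation action (since the $(n+2)$nd coordinate $-\sum y_j$ plays a distinguished role in $\phi_{n+1}$ but not in the $S_{n+1}$-action on $V^{n+1}$). Once these compatibilities are settled, the symmetry argument reduces all of Theorem \ref{ita} to the case already handled.
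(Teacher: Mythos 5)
Your proposal is correct and takes essentially the same approach as the paper: the paper itself reduces to the case $(i,j)=(1,n+1)$ by appealing to the symmetry of the Coxeter arrangement in $i$ and $j$ (and after proving that case merely asserts the general one is ``similar''), and you make that reduction explicit through the permutation action. One small notational slip worth fixing: with $(\tilde\sigma x)_m=x_{\sigma^{-1}(m)}$ the image formula is $\tilde\sigma(L_{k,l}^{n+1})=L_{\sigma(k),\sigma(l)}^{n+1}$ (your preimage computation actually establishes $\tilde\sigma^{-1}(L_{k,l}^{n+1})=L_{\sigma^{-1}(k),\sigma^{-1}(l)}^{n+1}$, not the stated identity), and since $V^{n+1}\subset\R^{n+2}$ the relevant permutation group is $S_{n+2}$ rather than $S_{n+1}$; with these corrections your choice $\sigma(i)=1$, $\sigma(j)=n+1$ correctly yields $\tilde\sigma(L_{i,j}^{n+1})=L_{1,n+1}^{n+1}$ and the rest of the argument goes through.
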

	\subsection{Double representation for Coxeter arrangement}
	
	From Theorem \ref{tthm}, the abelian category $\mathscr{P}er(V_{\C}^n,S^n)$ is equivalent to the abelian category  $\mathcal{J}_{\mathcal{A}_n}$, the full subcategory of double representations of $\mathcal{C}_{\mathcal{A}_n}$ satisfying \emph{monotonicity}, \emph{transitivity} and \emph{invertibility}. Let us denote  $\mathcal{J}_{n}:=\mathcal{J}_{\mathcal{A}_n}$. 
	Reacll that the hyperplane arrangement $\mathcal{H}_{\mathcal{A}_n}$ is isomorphic to the hyperplane arrangement $\mathcal{H}_{\R^n}$ in $\R^{n}$ for all $n$ and   $\mathcal{H}_{\mathcal{A}_{n+1}}$ induces an arrangement on $L^{n+1}_{i,j}$. The induced arrangement on $L^{n+1}_{i,j}$ is isomorphic to $\mathcal{H}_{W_n^{i,j}}$. By Theorem $\ref{ita}$  there is an order preserving bijection between posets of  faces of the above two arrangements $\mathcal{H}_{\R^n}$ and $\mathcal{H}_{W_n^{i,j}}$. In particular, there is an order preserving embedding of posets $$\iota_{L_{i,j}^{n+1}}:\mathcal{C}_{\mathcal{A}_{n}}\hookrightarrow \mathcal{C}_{\mathcal{A}_{n+1}}.$$          
	Define $$\tilde{\Phi}_{L_{i,j}^{n+1}}:\Rep_2(\mathcal{A}_{n})\rightarrow \Rep_2(\mathcal{A}_{n+1})$$
	$$\psi=(E_C,\gamma_{C'C},\delta_{CC'})_{(C,C'\in\mathcal{C}_{\mathcal{A}_{n}})}\mapsto \tilde{\Phi}_{L_{i,j}^{n+1}}\psi:=(\tilde{E}_D,\tilde{\gamma}_{D'D},\tilde{\delta}_{DD'})_{(D,D'\in\mathcal{C}_{\mathcal{A}_{n+1}})},$$
	where \begin{align*}
	\tilde{E}_D&=E_{\iota_{L_{i,j}^{n+1}}(\tilde{C})} \hspace{10pt} \text{ if }  D=\iota_{L_{i,j}^{n+1}}(\tilde{C})\\
	&=0, \text{          otherwise.}  
	\end{align*}

	\begin{prop}\label{thm}
		For any $1\leq i<j\leq n+1$,  $\tilde{\Phi}_{L_{i,j}^{n+1}}$ induces an exact and fully-faithful functor 
		$\Phi_{L_{i,j}^{n+1}}:\mathcal{J}_n\rightarrow \mathcal{J}_{n+1}$.
	\end{prop}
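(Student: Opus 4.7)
The plan is to verify in order: (i) that $\tilde\Phi_{L_{i,j}^{n+1}}$ sends double representations of $\mathcal{C}_{\mathcal{A}_n}$ to double representations of $\mathcal{C}_{\mathcal{A}_{n+1}}$; (ii) that it preserves the three defining relations of $\mathcal{J}$, so it descends to a functor $\Phi_{L_{i,j}^{n+1}}\colon\mathcal{J}_n\to\mathcal{J}_{n+1}$; and (iii) that exactness and fully-faithfulness then come for free from the cellwise nature of the construction. The organizing principle is a single structural observation: the image $\Sigma:=\iota_{L_{i,j}^{n+1}}(\mathcal{C}_{\mathcal{A}_n})\subset\mathcal{C}_{\mathcal{A}_{n+1}}$ is a \emph{lower set} in the face poset. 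Indeed, $\Sigma$ is exactly the set of faces contained in the hyperplane $L_{i,j}^{n+1}$, i.e., those $D$ with $v_{L_{i,j}^{n+1}}(D)=0$; if $D\leq D'$ with $D'\in\Sigma$, then $v_{L_{i,j}^{n+1}}(D)\leq 0$ in the order $\{0\leq +,\,0\leq -\}$, forcing $v_{L_{i,j}^{n+1}}(D)=0$, hence $D\in\Sigma$. I will extract this downward-closure lemma at the outset and use it repeatedly.

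With the lemma in hand, checking that $\tilde\Phi\psi$ satisfies the identity and composition axioms of a double representation reduces to a case split on whether the cells involved lie in $\Sigma$: either the largest cell in the chain lies in $\Sigma$, in which case downward closure places all intermediate cells in $\Sigma$ and the relations are pulled back via the bijection $\iota_{L_{i,j}^{n+1}}^{-1}$ from the corresponding relations in $\psi$; or it does not, in which case some vector space at an endpoint of the relevant arrow is zero and both sides vanish. Morphisms are handled analogously: a morphism $f\colon\psi_1\to\psi_2$ extends to $\tilde\Phi(f)$ by $\tilde\Phi(f)_D=f_{\iota_{L_{i,j}^{n+1}}^{-1}(D)}$ on $\Sigma$ and $\tilde\Phi(f)_D=0$ otherwise, with compatibility following by the same dichotomy. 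Exactness is then immediate: applied to $0\to\psi_1\to\psi_2\to\psi_3\to0$, the functor produces a sequence that is exact at every cell, since cells outside $\Sigma$ contribute the trivial exact sequence.

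The three $\mathcal{J}$-relations require slightly more geometric input. \emph{Monotonicity} at a cell $D$ is immediate when $D\notin\Sigma$ (both sides vanish) and when $D\in\Sigma$ it is inherited from $\psi$, because for any $C'\leq D$ downward closure places $C'\in\Sigma$. For \emph{transitivity} on a colinear triple $(C_1,D_1,C_2)$: if either $C_1$ or $C_2$ lies outside $\Sigma$ the relation is $0=0$; if both lie in $\Sigma$, I will invoke that $L_{i,j}^{n+1}$ is a linear, hence convex, subspace, so any line segment joining a point of $C_1\subset L_{i,j}^{n+1}$ to a point of $C_2\subset L_{i,j}^{n+1}$ remains in $L_{i,j}^{n+1}$; since $D_1$ contains such segments by the colinearity hypothesis, $D_1\in\Sigma$, and the relation descends from $\psi$. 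For \emph{invertibility}, two opposed faces $C_1,C_2$ share the same zero-set in their sign vectors, so $C_1\in\Sigma\iff C_2\in\Sigma$; in the ``both-in'' case the isomorphism condition reduces to one in $\mathcal{J}_n$ (downward closure placing $D_1\in\Sigma$), and in the ``both-out'' case the map is the trivial isomorphism $0\to 0$.

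Fully-faithfulness is then formal. Faithfulness: if $\tilde\Phi(f)=0$, then for every $C\in\mathcal{C}_{\mathcal{A}_n}$ we have $f_C=\tilde\Phi(f)_{\iota_{L_{i,j}^{n+1}}(C)}=0$. Fullness: a morphism $g\colon\tilde\Phi\psi_1\to\tilde\Phi\psi_2$ automatically has $g_D=0$ outside $\Sigma$ (the spaces are zero), and $f_C:=g_{\iota_{L_{i,j}^{n+1}}(C)}$ defines a morphism in $\mathcal{J}_n$ because its compatibility with $\gamma,\delta$ is inherited from that of $g$ on $\Sigma$. The main conceptual obstacle is concentrated in the transitivity step: it is the one place where the verification needs a genuine geometric fact about the embedding $L_{i,j}^{n+1}\hookrightarrow V^{n+1}$ beyond the purely combinatorial downward-closure lemma. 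Once convexity of the hyperplane is invoked, the remainder of the argument is bookkeeping.
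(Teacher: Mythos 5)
Your proposal is correct and follows essentially the same route as the paper: verify the three $\mathcal{J}$-relations by a case split on membership in $\Sigma=\iota_{L_{i,j}^{n+1}}(\mathcal{C}_{\mathcal{A}_n})$, using linearity of $L_{i,j}^{n+1}$ to control the middle cell in the transitivity relation and the matching zero-sets of opposed faces for invertibility. The only difference is presentational: you isolate the downward-closure of $\Sigma$ in the face poset as an explicit lemma and spell out exactness and fully-faithfulness, which the paper leaves implicit once the image is seen to land in the full subcategory $\mathcal{J}_{n+1|L^{n+1}_{i,j}}$.
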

	\begin{proof}
		
		Let $\mathcal{J}_{n+1|L^{n+1}_{i,j}}$ be the full subcategory of $\mathcal{J}_{n+1}$ consisting of double representations of poset associated to the induce hyperplane arrangement on $L^{n+1}_{i,j}$.  
		Thus any element of $\mathcal{J}_{n+1|L^{n+1}_{i,j}}$ can be identified with  $(E_C,\gamma_{C'C},\delta_{CC'})_{(C,C'\in \bar{\mathcal{C}}_{\mathcal{A}_{n}} )}$ where $ \bar{\mathcal{C}}_{\mathcal{A}_{n}} := \iota_{L_{i,j}^{n+1}}(\mathcal{C}_{\mathcal{A}_{n}}) $. 
		In order to prove $\tilde{\Phi}_{L_{i,j}^{n+1}}$ induces the following functor
		\[\Phi_{L_{i,j}^{n+1}}:\mathcal{J}_{n}\rightarrow \mathcal{J}_{n+1|L^{n+1}_{i,j}} \subseteq \mathcal{J}_{n+1},\] it is enough to show that  $\tilde{\Phi}_{L_{i,j}^{n+1}}(\psi)$ satisfies monotonicity, transitivity and invertibility. For simplicity, we consider the case $i=1$ and $j=n+1$.
		
		\emph{Monotonicity} $:$ For any   $\psi=(E_C,\gamma_{C'C},\delta_{CC'})_{(C,C'\in\mathcal{C}_{\mathcal{A}_{n}})}\in\mathcal{J}_n$, by montonicity one has $$\gamma_{C'C}\delta_{CC'}=Id_{E_C},$$ where $C'$, $C\in \mathcal{C}_{\mathcal{A}_{n}}$  satisfying $C'\leq C$.  
		Hence $\iota_{L_{1,n+1}^{n+1}}(C')\leq \iota_{L_{1,n+1}^{n+1}}(C)$. Thus $\tilde{\Phi}_{L_{i,j}^{n}}(\psi)$ satisfies  monotonicity relation for $D=\iota_{L_{1,n+1}^{n+1}}(C)$ and $D'=\iota_{L_{1,n+1}^{n+1}}(C')$.  For any $D\subset (V^{n+1}\setminus L^{n+1}_{1,n+1} )$, one has  $E_D=0$ and $\delta_{DD'}$ is the zero map.  Hence  monotonicity is satisfied by $\Phi_{L_{1,n+1}^{n+1}}(\psi)$.


		\emph{Transitivity} $:$ For a collinear triple $(A,B,C)\in \mathcal{C}_{\mathcal{A}_n}^3$, using the transitivity relation on $\phi$ one obtains  $\phi_{AC}=\phi_{BC}\phi_{AB}$. In order to check transitivity for $\Phi_{L_{1,n+1}^{n+1}}(\psi)$, it is enough to prove transitivity for all collinear triples in $\mathcal{C}_{\mathcal{A}_{n+1}}^3$. 
		
		If $A$, $B$ and $C \subset L^{n+1}_{1,n+1}$, then collinear triple $(A,B,C)$ is a image of a collinear triple in $\mathcal{C}_{\mathcal{A}_{n}}$. Hence transitivity of such a triple is satisfied by the transitivity relation of $\psi$.  If any two of $A$, $B$ and $C$ lies in $L^{n+1}_{1,n+1}$ then the third one must lie on $L^{n+1}_{1,n+1}$, as $L^{n+1}_{1,n+1}$ is a linear subspace of $V^{n+1}$. Hence the transitivity  is satisfied for such triple follows from the previous case. If $A$, $C\subset (V^{n+1}\setminus L^{n+1}_{1,n+1})$ and $B\subset L^{n+1}_{1,n+1}$, then from definition it follows that $E_A=E_C=0$ with  $\phi_{AC}=0$ and $\phi_{AB}=0$. Hence $\phi_{AC}=\phi_{BC}\phi_{AB}=0$. Similarly  if none of the $A$, $B$ and $C$ lies in $L^{n+1}_{1,n+1}$, then $E_A=E_B=E_C=0$ and  $\phi_{AC}=\phi_{BC}=\phi_{AB}=0$. Thus   transitivity holds for $\Phi_{L_{1,n+1}^{n+1}}(\psi)$.

		\emph{Invertibility} $:$ Let $C_1$ and $C_2$ be two faces opposes through  a $(d-1)$-dimensional face $B$. Then $\phi_{C_1C_2}$ is an isomorphism. Thus either $C_1$ and $C_2$ both lies in $L^{n+1}_{1,n+1}$ or both do not lie in $L^{n+1}_{1,n+1}$. In any case it is an isomorphism.

	\end{proof}

	\begin{thm}
		For any $1\leq i<j\leq n+1$,	there  is a fully-faithful exact functor compatible with the Verdier duality
		
		\begin{equation}
		\Lambda_{L_{i,j}^{n+1}}:\mathscr{P}er(V_{\C}^n,\mathcal{S}^n)\rightarrow \mathscr{P}er(V_{\C}^{n+1},\mathcal{S}^{n+1}),
		\end{equation}
		which makes the following diagram commutative
		$$
		\begin{tikzcd}
		\mathscr{P}er(V_{\C}^n,\mathcal{S}^n)\arrow{r}{\mathcal{Q}^n}\arrow{d}{\Lambda_{L_{i,j}^{n+1}}}& \mathcal{J}_i\arrow{d}{\Phi_{L_{i,j}^{n}}}\\
		\mathscr{P}er(V_{\C}^{n+1},\mathcal{S}^{n+1})\arrow{r}{\mathcal{Q}^{n+1}}& \mathcal{J}_{n+1}.
		\end{tikzcd}
		$$ 
		Moreover image  of simple objects are simple.
	\end{thm}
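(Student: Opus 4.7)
The strategy is to transport the functor $\Phi_{L_{i,j}^{n+1}}$ from Proposition \ref{thm} across the Kapranov--Schechtman equivalence in Theorem \ref{tthm}. Concretely, I would define
\[
\Lambda_{L_{i,j}^{n+1}} \;:=\; (\mathcal{Q}^{n+1})^{-1}\circ \Phi_{L_{i,j}^{n+1}} \circ \mathcal{Q}^{n}.
\]
Commutativity of the square is then automatic (up to the natural isomorphism $(\mathcal Q^{n+1})^{-1}\mathcal Q^{n+1}\cong \mathrm{id}$). Exactness and full faithfulness of $\Lambda_{L_{i,j}^{n+1}}$ follow formally because $\mathcal Q^{n}$ and $\mathcal Q^{n+1}$ are equivalences of abelian categories and $\Phi_{L_{i,j}^{n+1}}$ is exact and fully faithful by Proposition \ref{thm}.

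\textbf{Verdier duality compatibility.} Theorem \ref{tthm} already tells us that $\mathcal{Q}^{n}$ and $\mathcal{Q}^{n+1}$ intertwine the Verdier duality $\mathcal F\mapsto \mathcal F^{\star}$ with the duality $(E_C,\gamma_{C'C},\delta_{CC'})\mapsto(E_C^{*},\delta_{CC'}^{*},\gamma_{C'C}^{*})$ on $\mathcal{J}_{n},\mathcal{J}_{n+1}$. So it suffices to verify that $\Phi_{L_{i,j}^{n+1}}$ itself commutes with the duality on $\mathcal{J}$. This is a direct check from the definition: $\Phi_{L_{i,j}^{n+1}}$ merely transports the vector space at $C\in\mathcal C_{\mathcal A_n}$ to the cell $\iota_{L_{i,j}^{n+1}}(C)\in \mathcal C_{\mathcal A_{n+1}}$ and sets all other components to $0$. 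Since the duality acts componentwise on cells (and $0^{*}=0$), the operations of applying $\Phi_{L_{i,j}^{n+1}}$ and then dualizing coincide with dualizing and then applying $\Phi_{L_{i,j}^{n+1}}$; the identification of dualized $\gamma,\delta$ with $\delta,\gamma$ is inherited from the one on $\mathcal C_{\mathcal A_n}$ via $\iota_{L_{i,j}^{n+1}}$.

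\textbf{Simple objects go to simple objects.} Let $\mathcal F\in \mathcal J_n$ be simple and put $\tilde{\mathcal F}:=\Phi_{L_{i,j}^{n+1}}(\mathcal F)$. Suppose $\mathcal G\hookrightarrow \tilde{\mathcal F}$ is a nonzero subobject in $\mathcal J_{n+1}$. For any cell $D\notin \iota_{L_{i,j}^{n+1}}(\mathcal C_{\mathcal A_n})$ the stalk $\tilde{\mathcal F}_D=0$, hence $\mathcal G_D=0$; thus $\mathcal G$ is supported on the image of $\iota_{L_{i,j}^{n+1}}$. Restricting $\mathcal G$ along $\iota_{L_{i,j}^{n+1}}$ yields a subdatum $\mathcal G'$ of $\mathcal F$ in $\Rep_2(\mathcal C_{\mathcal A_n})$. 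The plan is to show $\mathcal G'\in\mathcal J_n$, so that simplicity of $\mathcal F$ forces $\mathcal G'=\mathcal F$, hence $\mathcal G=\tilde{\mathcal F}$. Monotonicity for $\mathcal G'$ is immediate from monotonicity for $\mathcal G$. For transitivity and invertibility, one uses that the order-preserving bijection between $\mathcal C_{\mathcal A_n}$ and the faces of the induced arrangement on $L_{i,j}^{n+1}$ (Theorem \ref{ita}) preserves the combinatorial configurations of colinear triples and of opposed pairs of top-dimensional faces inside $L_{i,j}^{n+1}$; hence transitivity and invertibility for $\mathcal G$ (which hold because $\mathcal G\in\mathcal J_{n+1}$) pull back to the corresponding relations for $\mathcal G'$ in $\mathcal J_n$.

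\textbf{Main obstacle.} The only nontrivial step is this last verification, namely that the embedding $\iota_{L_{i,j}^{n+1}}$ correctly transports the notions of colinearity and of opposed pairs from $\mathcal C_{\mathcal A_n}$ to their images inside $\mathcal C_{\mathcal A_{n+1}}$. Order preservation alone (Theorem \ref{imp}, Theorem \ref{ita}) is not a priori enough, since these relations are defined in terms of sign vectors on all hyperplanes of $\mathcal A_{n+1}$, not only on those of $\mathcal A_n$. What saves the day is that images of cells lie entirely inside the linear subspace $L_{i,j}^{n+1}$, so colinearity (a convex-geometric condition on line segments) is intrinsic to that subspace, and an opposed pair in $\mathcal C_{\mathcal A_n}$ maps to a pair of faces in $L_{i,j}^{n+1}$ whose sign vectors still differ only in a single nonzero coordinate coming from an ambient hyperplane not containing $L_{i,j}^{n+1}$. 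Once this is spelled out, the proof of the theorem is complete.
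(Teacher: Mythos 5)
Your core definition $\Lambda_{L_{i,j}^{n+1}} := (\mathcal{Q}^{n+1})^{-1}\circ\Phi_{L_{i,j}^{n+1}}\circ\mathcal{Q}^{n}$ is exactly the paper's (there $\Theta^{n+1}\circ r_*\circ\Phi_{L_{i,j}^{n}}\circ\mathcal{Q}^n$, with $r_*\circ\Phi_{L_{i,j}^n}$ being the Proposition \ref{thm} functor factored through $\mathcal{J}_{n+1|L_{i,j}^{n+1}}$), and the exactness/full-faithfulness/duality arguments coincide. The one genuine difference is that the paper refines the square into a three-row diagram passing through $\mathscr{P}er(L^{n+1}_{i,j,\C},\mathcal{S}_{|L^{n+1}_{i,j}}^{n+1})$, identifying the left vertical arrow with the adjoint $(h_{(-1)}\circ k_*)[-1]$ of the hyperbolic restriction of [KV, \S 5.A] and invoking [KV, Prop.~5.1]; this buys a concrete sheaf-theoretic description of $\Lambda$ (a pushforward along the complexified hyperplane) that your purely formal conjugation does not provide. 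For ``simples go to simples'', the paper's one line (``any sub-object of the image is again in the image'') is cleanest to justify by observing that the essential image of $\Phi_{L_{i,j}^{n+1}}$ is precisely the full subcategory of $\mathcal J_{n+1}$ of double representations with $E_D=0$ for $D\notin\iota_{L_{i,j}^{n+1}}(\mathcal C_{\mathcal A_n})$ — a vanishing condition that is manifestly preserved by subobjects and quotients, i.e.\ a Serre subcategory, and a fully faithful exact functor onto a Serre subcategory carries simples to simples. This sidesteps most of your ``main obstacle''. What you still owe is essential surjectivity of $\Phi_{L_{i,j}^{n+1}}$ onto that subcategory, which does require the compatibility of $\iota_{L_{i,j}^{n+1}}$ with colinearity and opposed pairs; your observations (colinearity is a convex condition intrinsic to the linear subspace $L_{i,j}^{n+1}$, and the image cells share the extra zero sign-coordinate on the ambient hyperplanes containing $L_{i,j}^{n+1}$) are the right ideas and do close the gap, but they should be turned into a proven lemma rather than left as a flagged obstacle.
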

	\begin{proof}
We have the following commutative diagram
$$
\begin{tikzcd}
\mathscr{P}er(V_{\C}^n,\mathcal{S}^n)\arrow{r}{\mathcal{Q}^n}\arrow{d}{\cong}& \mathcal{J}_n\arrow{d}{\Phi_{L_{i,j}^{n}}}\\
\mathscr{P}er(L^{n+1}_{i,j},\mathcal{S}_{|L^{n+1}_{i,j}}^{n+1})\arrow{r}{\mathcal{Q}^{n+1}_{|L^{n+1}_{i,j}}}\arrow{d}{(h_{(-1)}\circ k_*)[-1]}& \mathcal{J}_{n+1|L^{n+1}_{i,j}}\arrow{d}{r_*}\\
\mathscr{P}er(V_{\C}^{n+1},\mathcal{S}^{n+1})\arrow{r}{\mathcal{Q}^{n+1}}& \mathcal{J}_{n+1}.
\end{tikzcd}
$$
In this diagram, $r_*$ is the right adjoint of the hyperbolic restriction, [Sec 5.A, \cite{KV}], where $k:L_{i,j \mathbb C} \hookrightarrow \mathbb R^n + i L_{i,j}$ and  $h :R^n + i L_{i,j}\hookrightarrow V_\mathbb C^n$. Let $h_{(-1)}$ be the right adjoint of $h^!$.

The functor  $\mathcal{Q}^{n+1}_{|L^{n+1}_{i,j}}$ and $\Phi_{L_{i,j}^{n}}$ are equivalence of categories (by \ref{tthm} and \ref{thm}). Let $\Theta^{n+1}$ be the inverse of the functor $\mathcal{Q}^{n+1}$ and  $\Theta^{n+1}_{|L^{n+1}_{i,j}}$ be the the inverse of $\mathcal{Q}^{n+1}_{|L^{n+1}_{i,j}}$, the restriction   $\mathcal{Q}^{n+1}$.
Thus the top left vertical functor	
$$\Theta^{n+1}_{|L^{n+1}_{i,j}} \circ  \Phi_{L_{i,j}^{n}}\circ\mathcal{Q}^n$$ is an equivalence of categories.  

We can check that the lower square commutes with hyperbolic extensions $r^*$ on right and $(k^*\circ h^{!}[1])$ on left, [Prop 5.1, \cite{KV}]. Hence using the adjunction required square commutes i.e. $r_* \circ \mathcal{Q}^{n+1}_{|L^{n+1}_{i,j}} = \mathcal{Q}^{n+1} \circ ((h_{(-1)}\circ k_*) [-1])$.
Therefore we can define	 
$$\Lambda_{L_{i,j}^{n+1}}:=\Theta^{n+1}\circ r_*\circ  \Phi_{L_{i,j}^{n}}\circ\mathcal{Q}^n,$$  
which gives a fully faithful exact functor with the required commutative square. The compatibility with the Verdier duality follows from the fact that all three arrows are compatible with duality. The last assertion follows from the fact that any sub-object of the image is again in the image.
	\end{proof}	
	
	\begin{cor}
		Let $\mathcal{F}$ be a constructible sheaves on the hyperplane arrangement $\mathcal{A}_{n+1}$. Then we have the following.
		\begin{enumerate}
			\item  If $\mathcal{F}\in \mathscr{P}er(V_{\C}^{n+1},\mathcal{S}^{n+1})$ is irreducible then  $\dim_{\C}(E_C(\mathcal{F}))=0$ or $1$, where $C$ be any open cell in $\mathcal{C}_{\mathcal{A}_{n+1}}$.   
			\item  Suppose $\mathcal{F}\in \mathscr{P}er(V_{\C}^{n+1},\mathcal{S}^{n+1})$ is irreducible and  $\dim_{\C}(E_C(\mathcal{F}))=0$ where $C$ be any open cell in  $\mathcal{C}_{\mathcal{A}_{n+1}}$. Then there exists $\mathcal{G}\in \mathscr{P}er(V_{\C}^{n},\mathcal{S}^{n})$ and $L_{i,j}^{n+1}\in \mathcal{H}_{\mathcal{A}_{n+1}}$ such that  \[\Lambda_{L_{i,j}^{n+1}}(\mathcal{G})=\mathcal{F}.\]
		\end{enumerate}
	\end{cor}
	\begin{proof}
		\begin{enumerate}
			\item For any two open cell $C$ and $C'\in \mathcal{C}_{\mathcal{A}_n}$, there exist $$\phi_{CC'}:E_{C}(\mathcal{F})\longrightarrow E_{C'}(\mathcal{F}),$$ which is an isomorphism by invertibility. Thus if $E_{\lambda}\subset E_{C}(\mathcal{F}) $ be a proper subspace, one necessarily has a proper subspace $\phi_{CC'}(E_{\lambda})\subset E_{C'}(\mathcal{F})$. Since,   
			by monotonicity, for any faces $D$, $C$ satisfying $D\leq C$, one has a injective map $\delta_{DC}:E_{C}(\mathcal{F})\longrightarrow E_{D}(\mathcal{F})$. Hence one can generate a proper subrepresentation of $\left(E_{C}(\mathcal{F}),\delta_{C'C},\gamma_{CC'}\right)_{C\in \mathcal{C}_{\mathcal{A}_n} }$ by taking translates of $E_{\lambda}$ under various maps $\delta_{\star,\star}$.

			Thus if $\left(E_{C}(\mathcal{F}),\delta_{C'C},\gamma_{CC'}\right)_{C\in \mathcal{C}_{\mathcal{A}_n} }$  is irreducible then  $E_C(\mathcal{F})$ is irreducible  if and only if $\dim_{\C}(E_C(\mathcal{F}))=0$ or $1$.   
			
			\item Suppose $\mathcal{F}$ is irreducible and there exist two distinct codimension one cell $D_1$ and $D_2$ and two distinct hyperplanes $H_1$ and $H_2\in \mathcal{A}_{n+1}$ such that for all $i$
			\begin{align*}
			E_{D_i}(\mathcal{F})&\neq 0\\
			D_i\subset& H_i
			\end{align*}
			There exist an open cell $C$ in  $\mathcal{A}_{n+1}$ such that  $(D_1,C,D_2)$ form a collinear triple. Indeed, any line joining $d_1\in D_1$ and $d_2\in D_2$ must intersect an open cell, Since $D_i$ is a connected component of $H_i\setminus\cup_{H_j(\neq H_i)\in \mathcal{A}_{n+1}}\left(H_j\cap H_k \right)$ for $i\in\{1,2\}$.  Then by transitivity 
			\[\phi_{D_1D_2}=\phi_{C_2D_2}\phi_{D_1C_2}=0\]
			
			Similarly there exist $L\subset H_1\cap H_2$, a cell of codimension one in  $H_1$ and $H_2$ such that $(D_1,L,D_2)$ forms a collinear triple. Then $\phi_{D_1D_2}=\gamma_{LD_2}\delta_{D_1L}=0$.
			Hence $$\Im(\delta_{D_1L})\subset \Ker(\gamma_{LD_2}).$$
			In other words $E_L(\mathcal{F})$ contains $\Im(\delta_{D_1L})$ and $\Im(\delta_{D_2L})$ as direct summand. Similarly it happen in all other cells contained in $H_1\cap H_2$. Hence $E_C(\mathcal{F})$ can not be irreducible, which a contradiction. 
		\end{enumerate}    
	\end{proof}

\begin{cor}
	Let $\mathcal{F}\in \mathscr{P}er(V_{\C}^{2},\mathcal{S}^{2})$ be be a perverse sheaves on the hyperplane arrangement $\mathcal{A}_{2}$. If $\mathcal{F}$ is irreducible and $\dim E_{C}(\mathcal{F})=1$, where $C$ be an open cells, then $\dim E_{D}(\mathcal{F})\leq 2$,  for any cell $D$ of dimension one. 
\end{cor}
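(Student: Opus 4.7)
The plan is to argue by contradiction: assume $\dim_{\C}E_D(\mathcal{F})>2$ for some $1$-dimensional cell $D$ and construct a proper nonzero subobject of $\mathcal{F}$ in $\mathcal{J}_2$, contradicting irreducibility. Let $C_1,C_2$ be the two open chambers adjacent to $D$. By monotonicity each $\delta_{C_iD}:E_{C_i}\to E_D$ is injective, so $L_i:=\delta_{C_iD}(E_{C_i})$ is a $1$-dimensional subspace of $E_D$; the claim reduces to showing that $L_1+L_2=E_D$.

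Define a sub-datum $U$ of the double representation $\mathcal{Q}^2(\mathcal{F})$ by setting $U_C:=E_C$ for every open chamber $C$, $U_{D'}:=\delta_{C'_1D'}(E_{C'_1})+\delta_{C'_2D'}(E_{C'_2})$ for each $1$-dimensional ray $D'$ with adjacent chambers $C'_1,C'_2$, and $U_0:=\sum_{C}\delta_{C0}(E_C)$ at the origin. Closure of $U$ under all $\delta$-maps and under all $\gamma$-maps landing in open chambers is built into the definition, while monotonicity and transitivity automatically pass to any subrepresentation closed under $\gamma,\delta$. This leaves two substantive checks: (i) $\gamma_{0D'}(U_0)\subset U_{D'}$ for every $1$-dim $D'$, and (ii) for every pair of opposite rays $(D',D'')$ sharing a hyperplane, the restriction of $\phi_{D'D''}$ to $U_{D'}$ is an isomorphism onto $U_{D''}$ (the one invertibility relation that is not automatic).

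Both (i) and (ii) rest on the same transitivity observation. For any open chamber $C$ and any $1$-dim cell $D'$, let $C^{\ast}$ be the unique open chamber adjacent to $D'$ on the same side of the hyperplane supporting $D'$ as $C$: every segment from a point of $C$ to a point of $D'$ stays on that side and must enter $D'$ through $C^{\ast}$, so the triple $(C,C^{\ast},D')$ is collinear in the sense of Kapranov--Schechtman. Transitivity then gives $\phi_{CD'}=\delta_{C^{\ast}D'}\circ\phi_{CC^{\ast}}$, and as already used in the proof of part~$(1)$, $\phi_{CC^{\ast}}$ between any two open chambers is an isomorphism (by invertibility applied along a chain of adjacencies connecting $C$ to $C^{\ast}$ within a single half-space). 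Hence $\phi_{CD'}(E_C)=\delta_{C^{\ast}D'}(E_{C^{\ast}})\subset U_{D'}$, which settles (i). For (ii), the two chambers $C_1,C_2$ adjacent to $D'$ lie on opposite sides of the hyperplane supporting $D',D''$, so they are routed to the two distinct chambers adjacent to $D''$, and $\phi_{D'D''}$ sends $L_1,L_2$ isomorphically onto the two $1$-dim summands of $U_{D''}$.

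Once $U$ is shown to be a subobject in $\mathcal{J}_2$, it is nonzero (since $U_{C_1}=E_{C_1}\neq 0$) and proper (since $\dim U_D\leq 2<\dim E_D$ under the contradiction hypothesis), violating irreducibility, and therefore $\dim E_D\leq 2$. The main delicate step I expect is the invertibility check (ii): identifying the correct collinear triples for rays on opposite sides of the arrangement and confirming that the intermediate maps $\phi_{CC^{\ast}}$ are genuine isomorphisms for non-opposing pairs of open chambers. This is where the concrete hexagonal combinatorics of $\mathcal{A}_2$ actually enter the argument.
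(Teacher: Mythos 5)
Your proof is correct and follows the same strategy as the paper: assemble the images of the $\delta$-maps from the open chambers into a candidate subobject $U$ of $\mathcal{Q}^2(\mathcal{F})$, note that $\dim U_{D'} \leq 2$ because each one-dimensional cell is dominated by exactly two open chambers with one-dimensional stalks, and invoke irreducibility to force $U_{D'} = E_{D'}$. The paper's write-up is much terser and never explicitly constructs $U$ or checks the monotonicity/transitivity/invertibility relations for it; your steps (i) and (ii) --- that $\gamma_{0D'}(U_0) \subset U_{D'}$ via routing each $\phi_{CD'}$ through the chamber $C^{\ast}$ adjacent to $D'$, and that $\phi_{D'D''}$ restricts to an isomorphism $U_{D'} \to U_{D''}$ because $\phi_{D'D''}\circ\delta_{C_iD'}=\phi_{C_iD''}=\delta_{C_i^{\ast}D''}\circ\phi_{C_iC_i^{\ast}}$ --- supply exactly the details the paper leaves implicit, so this is a completion of the same argument rather than a different route.
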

\begin{proof}
	
Let $C_1$, $C_2$ be two open cell and $D$ be a cell of dimension one in $\mathcal{A}_2$ satisfying 
$$C_1\geq D\leq C_2.$$
Hence by monotonicity, $$E_{C_1}(\mathcal{F})\hookrightarrow E_{D}(\mathcal{F}) \text{ and }E_{C_2}(\mathcal{F})\hookrightarrow E_{D}(\mathcal{F}).$$
Since $C_1$ and $C_2$ are opposed by $D_1$, by invertibility, $$\phi_{DC_2}\circ\phi_{C_1D}:E_{C_1}(\mathcal{F})\cong E_{C_2}(\mathcal{F})$$
is an isomorphism. Moreover $\phi_{C_1O}:E_{C_1}(\mathcal{F})\rightarrow  E_{O}(\mathcal{F})$ can be factorised by $\phi_{C_1O}=\phi_{DO}\phi_{C_1D}$. Similarly $\phi_{C_2O}=\phi_{DO}\phi_{C_2D}$. 
Moreover, for any open cells $C$ and $C'$, one necessarily has $E_{C}(\mathcal{F})\cong E_{C'}(\mathcal{F})$. Observe that any cell $D$ of dimension one can be dominated by exactly two open cells. 
Therefore	image of $\phi_{C_1 D}$ and image of $\phi_{C_2 D}$ will generate a subspace of  $E_{D}(\mathcal{F})$ of dimension at most two. Since $\mathcal{F}$ is irreducible, $\mathcal{Q}(\mathcal{F})$ is irreducible (Theorem \ref{tthm}). Hence image of $\phi_{C_2 D}$ and $\phi_{C_2 D}$ would generate $E_{D}(\mathcal{F})$.

\end{proof}

\begin{rem}
	Since the classification of irreducible perverse sheaves of Coxeter arrangement of type $\mathcal{A}_2$ was given by Rowley under the assumption that $\dim(E_D(\mathcal{F})\leq2$ for any cell $D$ of dimension one in $\mathcal{A}_2$. Using the above corollary we obtain that this classification is a complete classification of irreducible perverse sheaves in $\mathcal{A}_2$.
\end{rem}

\subsection*{Acknowledgements} 

Both the authors are thankful to Harish-Chandra Research Institute. The second author  is also thankful to KSCSTE-Kerala School of Mathematics for their support.

\bibliographystyle{alpha}


\end{document}